\newcommand{\assign}{:=}
\numberwithin{equation}{section}
\newcommand{\tmop}[1]{\ensuremath{\operatorname{#1}}}
\newcommand{\tmstrong}[1]{\textbf{#1}}
\theoremstyle{plain}
\newtheorem{theorem}{Theorem}
\newtheorem{corollary}{Corollary}
\newtheorem{definition}{Definition}
\newtheorem{lemma}{Lemma}
\newtheorem{proposition}{Proposition}
\theoremstyle{remark}
\newtheorem{remark}{Remark}%}
\newtheorem{example}{Example}
\newcommand{\XXint}[3]{{\setbox}0=\text{\ensuremath{#1 #2 #3 \int}}
{\vcenter{\text{\ensuremath{#2 #3}}}}{\kern}-.5{\tmwd}0}
\newcommand{\opn}[2]{\newcommand{\1}{\}} {\opn}{\Rm{Rm}} {\opn}{\Ric{Ric}}
{\opn}{\Rc{Rc}} {\opn}{\Scal{Sc}} {\opn}{\Tr{Tr}} {\opn}{\Trac{Tr}}
{\opn}detdet {\opn}{\diam{diam}} {\opn}{\dist{dist}} {\opn}{\Im}Im
{\opn}{\div}div {\opn}{\Ker{Ker}} {\opn}expexp {\opn}{\Vol{Vol}}
{\opn}{\exph{exph}} {\opn}{\Herm{Herm}} {\opn}{\End{End}} {\opn}{\Hess{Hess}}
{\opn}{\Vol{Vol}}}
\newcommand{\N}{\mathbb{N}}
\newcommand{\contract}{{\kern}-1.5pt{\vrule} width6.0pt height0.4pt depth0pt
{\vrule} width0.4pt height4.0pt depth0pt}
\newcommand{\retract}{{\kern}-1.5pt{\vrule} width0.4pt height4.0pt depth0pt
{\vrule} width6.0pt height0.4pt depth0pt}
\newcommand{\Openbox}{{\leavevmode} {\hfil}{\vrule} width{\boxrulethickness}
{\vbox} to{\Openboxwidth{{\advance}{\Openboxwidth} -2{\boxrulethickness}
{\hrule} height {\boxrulethickness} width{\Openboxwidth}{\vfil} {\hrule}
height{\boxrulethickness}}}{\vrule} width{\boxrulethickness}{\hfil} }
\newcommand\shape{\operatorname{sh}}
\newcommand\child[2][]{\operatorname{Child}_{#1}(#2)}
\newcommand\nchild[2][]{\operatorname{\ell}_{#1}(#2)}  % nombre de fils
\newcommand\Trees[1]{\operatorname{Tree}_{#1}}         % arbre sur S union o
\newcommand\Root[1]{\rho(#1)}                           % racine vide
\newcommand\emptyroot{\circ}                           % racine vide
\newcommand\treeforest[1]{\operatorname{Tree}(#1)}     % arbres d'une foret
\newcommand\Forests{\mathcal{F}}
\newcommand\Map[2]{\operatorname{Map}(#1,#2)}
\DeclareMathOperator{\ad}{ad}
\DeclareMathOperator{\Dyck}{Dyck}
\newcommand{\Part}{\mathcal{P}}
\newcommand\dyckpath[3]{
  % start point, size, Dyck word (size x 2 booleans)
  \fill[cyan!25]  (#1) rectangle +(#2,#2);
  \fill[fill=lime]
  (#1)
  \foreach \dir in {#3}{
    \ifnum\dir=0
    -- ++(1,0)
    \else
    -- ++(0,1)
    \fi
  } |- (#1);
  \draw[help lines] (#1) grid +(#2,#2);
  \draw[dashed] (#1) -- +(#2,#2);
  \coordinate (prev) at (#1);
  \foreach \dir in {#3}{
    \ifnum\dir=0
    \coordinate (dep) at (1,0);
    \else
    \coordinate (dep) at (0,1);
    \fi
    \draw[line width=1pt,-stealth] (prev) -- ++(dep) coordinate (prev);
   };
}
\begin{document}

\title{Multiple Lie Derivatives and Forests}\author{\\
Florent HIVERT and Nefton PALI}\date{}\maketitle

\begin{abstract}
  We obtain a complete time expansion of the pull-back operator generated by a
  real analytic flow of real analytic automorphisms acting on analytic tensor
  sections of a manifold. Our expansion is given in terms of multiple Lie
  derivatives. Motivated by this expansion, we provide a rather simple and
  explicit estimate for higher order covariant derivatives of multiple Lie
  derivatives acting on smooth endomorphism sections of the tangent bundle of
  a manifold. We assume the covariant derivative to be torsion free. The
  estimate is given in terms of Dyck polynomials. The proof uses a new result
  on the combinatorics of rooted labeled ordered forests and Dyck polynomials.
\end{abstract}

\section{Notations and motivation}

In this paper the products over ordered sets of indices are always considered
from the left to the right with respect to the order structure of the set. For
any integer $k \geqslant 1$, we set $[k] \assign \{1, \ldots, k\}$ and we
denote by $\lambda \vDash k$ any element $\lambda \equiv \left( \lambda_1,
\ldots, \lambda_{l_{\lambda}} \right) \in \N^{l_\lambda}_{> 0} $ such that $\sum_{j = 1}^{l_\lambda} \lambda_j = k$. We denote
by $\N^l(p)$ the set of $P \equiv \left( p_1, \ldots, p_l \right) \in
\N^l$ such that $\sum_{j = 1}^l p_j = p$.

We start now with a general remark. Let $\left( \varphi_t \right)_{t \in
\left( - \varepsilon, \varepsilon \right)}$ be a real analytic family of real
analytic automorphisms of a real analytic manifold $X$ with $\varphi_0 = \tmop{id}_X$ and let $\xi
\left( t \right) \assign \dot{\varphi}_t \circ \varphi^{- 1}_t = \sum_{k
\geqslant 0} \xi_k t^k$. The pull back operator $\varphi^{\ast}_t$ acting on
real analytic sections of the tensor bundles $\left( T^{\ast}_X
\right)^{\otimes p} \otimes T^{\otimes q}_X$, with $p, q \geqslant 0$, can be
expanded as
\begin{equation}
  \label{PullExpansion} \varphi^{\ast}_t =\mathbbm{I}+ \sum_{k \geqslant 1}
  t^k \sum_{\lambda \vDash k} \prod_{j = 1}^{l_{\lambda}} L_{| \lambda |^{-
  1}_j \xi_{\lambda_j - 1}}\,,
\end{equation}
where $| \lambda |_j : = \sum^j_{r = 1} \lambda_r$. Of course we can relax the
analyticity assumption to smooth on the ground variable $x \in X$, but for our
future applications we will need to stay in the real analytic category.

We equip the Sobolev space $H^r( X, \left( T^{\ast}_X \right)^{\otimes
q} \otimes T^{\otimes p}_X)$ with the Sobolev norm $\| \cdot \|_r$
obtained using the covariant derivatives and the pointwise max norm on
multilinear forms with respect to a smooth Riemannian metric $g$. This norm is
equivalent to the usual Sobolev norm defined by means of partitions of unity.
We remind that the space $H^r( X, \left( T^{\ast}_X \right)^{\otimes q}
\otimes T^{\otimes p}_X)$ is an algebra for $r \in \mathbbm{N}$
sufficiently big and for such $r$ hold the inequality $\|u v\|_r \leqslant C_r
\|u\|_r \|v\|_r$, for some constant $C_r > 0$. From now on we fix such an $r$.
Sobolev norms are quite natural for Hardy spaces. In any case the estimate in
the main theorem below hold with respect to any algebra norm.

A case of major interest is when the pull back operator act on analytic
endomorphism sections of the tangent bundle. Indeed this is the case when we
consider a complex structure $J$ over a complex manifold $X$ and we wish to
study the dynamics of the flow $\varphi^{\ast}_t J$. As explained in \cite{P-S, Pal2, Pal3}, among others, this is a central problem in complex
differential geometry related with a strong version of the Hamilton-Tian
conjecture (See \cite{Pal3}). For the applications it is very important to have an
explicit and simple estimate of the multiple Lie derivatives that appear in
the expansion (\ref{PullExpansion}) and their higher order covariant
derivatives. This is provided by the following result, which is our main
theorem.

\begin{theorem}
  {\tmstrong{$\left( {\bf Main Theorem} \right)$}} \label{MainTeo}Let $\nabla$ be the extension to tensor sections of any torsion free connection and let
  $A$ be a smooth endomorphism section of the tangent bundle. Then for any
  family of smooth vector fields $\left( \xi_j \right)^k_{j = 1}$ the estimate
  holds
  \begin{multline*}
   \frac{1}{h!}  \left\| \nabla^h \left( \prod_{j = 1}^k L_{\xi_j} \right) A
    \right\|_r\\
    \ \ \leqslant\ \
    C^k_r \sum_{\substack{
        P \in \Dyck \left( k \right)\\
        H \in \N^{k + 1}(h)}}
    C_P  \frac{1}{h_{k + 1} !}
    \| \nabla^{h_{k + 1} + D_{P, k}} A\|_r
    \prod_{j = 1}^k \frac{1}{h_j !}  \| \nabla^{h_j + p_j} \xi_j \|_r\,,
  \end{multline*}
  where
  \begin{align*}
    \Dyck(k) &\ \assign\
    \left\{ P \equiv ( p_1,\dots, p_k) \in \N^k
      \ \middle|\
      \sum_{1 \leqslant r \leqslant j} p_r \leqslant j, \forall j \in [k]
    \right\}\,,\\
    D_{P, j} &\ \assign\
    j - \sum_{1 \leqslant r \leqslant j} p_r\,,\\
    C_P &\ \assign\
    \prod_{j = 1}^k \left[
      2 \binom{D_{P, j - 1}}{p_j - 1} + \binom{D_{P, j - 1}}{p_j}
    \right] \\
    &\ =\ \prod_{\substack{
      1 \leqslant j \leqslant k\\
      p_j  \neq 0
    }}
  \left( 2 + \frac{D_{P, j}}{p_j} \right) \binom{D_{P, j - 1}}{p_j - 1}\,,
  \end{align*}
  with the convention that $\binom{m}{n}=0$ in $n\notin\{0,1,\dots,m\}$.
\end{theorem}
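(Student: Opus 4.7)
The plan proceeds in three conceptually distinct stages: a local rewriting of the Lie derivative in terms of covariant derivatives, an inductive expansion of the full product indexed by Dyck paths, and a final distribution of the outer $h$ covariant derivatives via the Leibniz rule.

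For the first step I would use the torsion-freeness of $\nabla$ to obtain the pointwise identity
$$L_\xi A \;=\; \nabla_\xi A + [A,\nabla \xi]$$
for a smooth endomorphism section $A$ of $T_X$ and a smooth vector field $\xi$. Expanding the commutator yields three summands: one term $\nabla_\xi A$, in which one derivative lands on $A$ and none on $\xi$, and two terms of the form $\pm (\nabla\xi)\cdot A$, in which one derivative lands on $\xi$ and none on $A$. This is the only place the torsion hypothesis enters.

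For the second step I would prove by induction on $k$ an exact expansion
$$\left(\prod_{j=1}^k L_{\xi_j}\right)\! A \;=\; \sum_{P\in\Dyck(k)}\, \sum_{\sigma\in\Sigma_P}\!\pm\, T_{P,\sigma}\!\left(\nabla^{p_1}\xi_1,\ldots,\nabla^{p_k}\xi_k,\nabla^{D_{P,k}}A\right),$$
where each $T_{P,\sigma}$ is a pointwise tensorial contraction of its arguments and $|\Sigma_P|\leq C_P$. The Dyck constraint $\sum_{r\leq j}p_r\leq j$ appears naturally because $D_{P,j}\geq 0$ is precisely the number of free covariant indices of $A$ accumulated after $j$ Lie derivatives; the inductive step $j\to j+1$ either adds one such index (case $p_{j+1}=0$, from the $\nabla_{\xi_{j+1}}$ summand hitting $A$) or contracts $p_{j+1}\geq 1$ existing indices with derivatives of $\xi_{j+1}$ (from the same $\nabla_{\xi_{j+1}}$ hitting a previous $\xi_i$, or from the commutator term). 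Counting at each step the number of ways to select which indices get contracted, weighted by the two contributions of the commutator, produces exactly the factor $2\binom{D_{P,j-1}}{p_j-1}+\binom{D_{P,j-1}}{p_j}$ and hence the total coefficient $C_P$. For this step the forest encoding announced in the abstract is the natural bookkeeping device: each term on the right corresponds to a rooted labeled ordered forest whose shape projects onto $P$, and the binomial coefficients count forests of a given shape.

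For the third step I would apply $\nabla^h$ to both sides of the above expansion. By the (non-commutative) Leibniz rule, $\nabla^h$ distributes over each tensorial product into a sum over $H=(h_1,\ldots,h_{k+1})\in\N^{k+1}(h)$ with multinomial weight $h!/(h_1!\cdots h_{k+1}!)$; dividing by $h!$ turns this into $\prod_j 1/h_j!$. The algebra property $\|uv\|_r\leq C_r\|u\|_r\|v\|_r$ applied to the $k+1$ resulting factors, together with absorption of the finitely many connection-related contraction constants into a redefined $C_r$, then yields the stated estimate. The main obstacle is clearly the combinatorial identity of step two; the other two steps amount to an explicit Lie-derivative formula and a standard Leibniz distribution. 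A direct inductive bookkeeping of step two without the forest structure quickly becomes unwieldy, and I expect the forest/Dyck correspondence announced in the abstract to be the clean organising principle.
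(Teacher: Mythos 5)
Your overall architecture matches the paper's: the identity $L_\xi A=\nabla_\xi A+[A,\nabla\xi]$ from torsion-freeness (the source of the factor $2$ per commutator), an exact expansion of $\prod_j L_{\xi_j}A$ into tree-like contractions organised by Dyck vectors, and a final distribution of $\nabla^h$ with multinomial weights. Steps one and three are essentially right, with the caveat that $\nabla^h$ of a nested contraction does not obey a naive Leibniz rule: one needs the generalized Leibniz identity for higher covariant derivatives, proved from the inductive definition of $\nabla^{h+1}$ after normalising $\nabla_{\xi_0}\xi_p=0$ at a point, which is what Lemma~\ref{abstrLeibnitz} supplies, and the $h$ extra derivatives must be distributed over \emph{all} nodes of each tree, not over a flat product of $k+1$ factors.

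The genuine gap is in step two, exactly where you locate the main obstacle. Your inductive step ``$j\to j+1$'' fixes the exponent $p_{j+1}$ at the moment $L_{\xi_{j+1}}$ is processed, choosing among $D_{P,j}$ available slots. But in the composition $L_{\xi_1}\cdots L_{\xi_k}A$ the operator $L_{\xi_{j+1}}$ acts \emph{before} $L_{\xi_1},\dots,L_{\xi_j}$, and each of these outer derivatives can still hit $\xi_{j+1}$ and raise its exponent: already for $k=2$ the term $\nabla_{\nabla_{\xi_1}\xi_2}A$ contributes to $P=(0,1)$, the exponent of $\xi_2$ being created by $L_{\xi_1}$. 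So $p_{j+1}$ is not determined at stage $j+1$ in either natural processing order, while $D_{P,j}$ depends on $p_1,\dots,p_j$, i.e.\ on the outer operators; the recursion as you state it does not close. The paper's resolution is to first expand everything into strictly decreasing forests on $[k]\cup\{\circ\}$ (combining a set-partition formula for $\prod_j L_{\xi_j}$ with the tree expansion of multiple covariant derivatives), so that $p_\nu=\nchild{\nu}+\delta_{\nu,\mathrm{root}}$ and $D_{P,k}=\nchild{\circ}$, and then to prove $\sum_{F\,:\,X_F=X^P}2^{l_F-1}=C_P$ by a purely combinatorial induction on $k$: a pruning/regrafting map $F\mapsto F'$ transfers the children of $\circ$ to the node $k$, and the identities $X_{F'}=X_F|_{X_k=1}$ and $\nchild[F]{\circ}=k-\deg X_F$ show that $k$ has exactly $D_{P,k-1}$ children in $F'$, among which one chooses $p_k-1$ or $p_k$ of them according to whether $k$ is a root of $F$ or not. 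That is the mechanism producing the factor $2\binom{D_{P,k-1}}{p_k-1}+\binom{D_{P,k-1}}{p_k}$; without this device (or an equivalent one) your step two remains an announcement rather than a proof.
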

\bigskip

The elements $P$ of $\Dyck(k)$ are called \emph{Dyck vectors} of length
$k$. Each Dyck vector is associated to a \emph{Dyck monomial}
$X^P=X_1^{p_1}\cdots X_k^{p_k}$. We give below their full list for $k=1,2,3$
together with the associated $C_P$ coefficient
\begin{gather*}
  \Dyck(1) :\quad (0)\ 1\quad (1)\ 2 \\
  \Dyck(2) :\quad (00)\ 1\quad (01)\ 3\quad (02)\ 2\quad (10)\ 2\quad (11)\ 4 \\
  \Dyck(3) :
  \begin{array}{ccccccc}
    (000)\ 1&(001)\ 4&(002)\ 5&(003)\ 2&(010)\ 3&(011)\
    9&(012)\ 6\\
    (020)\ 2& (021)\ 4& (100)\ 2& (101)\ 6& (102)\ 4& (110)\ 4& (111)\ 8
  \end{array}
\end{gather*}
It easy to see that the cardinality $|\Dyck(k)|$ is the $k+1$-th Catalan
number $C_{k+1}$ where $C_k=\frac1{k+1}\binom{2k}{k}$. Indeed $C_k$ is
known~(see~\cite{OEIS} sequence
\href{http://oeis.org/A000108}{A000108}) to be the number of so called Dyck
path, that is lattice path on the grid $\N\times\N$, starting from $(0,0)$
ending at $(k,k)$ with only North and East step and staying under the
diagonal. As illustrated below, such a path can be bijectively encoded by the
length of the vertical segment, omitting the last one. Requiring that the path
stay below the diagonal is equivalent to the condition $\sum_{1 \leqslant r
  \leqslant j} p_r \leqslant j$
\[
\begin{tikzpicture}[scale=0.3, baseline=-40]
  \dyckpath{0,-9}{8}{0,1,0,0,1,1,0,0,0,1,0,1,1,0,1,1};
  \coordinate (prev) at (0,-10);
  \foreach \h in {1,0,2,0,0,1,2}{
    \coordinate (prev) at ($ (prev) + (1,0) $);
    \node at (prev) {$\h$};
  };
\end{tikzpicture}
\qquad\qquad
\begin{tikzpicture}[scale=0.3, baseline=-40]
  \dyckpath{0,-9}{8}{0,0,1,0,0,1,0,1,1,1,0,0,1,0,1,1};
  \coordinate (prev) at (0,-10);
  \foreach \h in {0,1,0,1,3,0,1}{
    \coordinate (prev) at ($ (prev) + (1,0) $);
    \node at (prev) {$\h$};
  };
\end{tikzpicture}
\]
\begin{example}\label{ex-comput-Cp}
We now illustrate the computation of $C_P$. Let $P=(0,1,0,1,3,0,1)$. Then the
value of $D_{P,j}$ are given by the following array:
\[\begin{array}{|c|cccccccc|}
  \hline
  j       & 0 & 1 & 2 & 3 & 4 & 5 & 6 & 7 \\
  \hline
  p_j  & & 0 & 1 & 0 & 1 & 3 & 0 & 1 \\
  D_{P,j} & 0 & 1 & 1 & 2 & 2 & 0 & 1 & 1 \\
  \hline
\end{array}\]
So that
\begin{align*}
  C_p &=
  \left[2\binom{0}{-1}+\binom{0}{0}\right]
  \left[2\binom{1}{0}+\binom{1}{1}\right]
  \left[2\binom{1}{-1}+\binom{1}{0}\right]\times\\
  &\times\left[2\binom{2}{0}+\binom{2}{1}\right]
  \left[2\binom{2}{2}+\binom{2}{3}\right]
  \left[2\binom{0}{-1}+\binom{0}{0}\right]
  \left[2\binom{1}{0}+\binom{1}{1}\right]\\
  &=72\,.
\end{align*}
\end{example}

\section{Proof of the expansion formula (\ref{PullExpansion})}

We remind first (see for instance lemma 27 in the sub-section 19.4 on page 916 of \cite{Pal1}) the well known derivation rule
\begin{equation}
  \label{generLIE}  \frac{d}{d t}  \left( \varphi^{\ast}_t \alpha_t \right)
  \ =\
  \varphi^{\ast}_t \left( \frac{d}{d t} \alpha_t + L_{\xi_t} \alpha_t \right),
\end{equation}
for any curve $t \longmapsto \alpha_t \in \left( T^{\ast}_X \right)^{\otimes
p} \otimes T^{\otimes q}_X$. What we need to prove in order to obtain
(\ref{PullExpansion}) is the formula
\begin{equation*}
  \frac{1}{k!}  \frac{d^k}{d t^k} _{\mid_{t = 0}}  \left( \varphi^{\ast}_t
  \alpha \right)\ =\ \sum_{\lambda \vDash k}  \left[ \prod_{j =
  1}^{l_{\lambda}} L_{| \lambda |^{- 1}_j \xi_{\lambda_j - 1}} \right]\alpha,
\end{equation*}
with $\alpha \in \left( T^{\ast}_X \right)^{\otimes p} \otimes T^{\otimes
q}_X$, that we rewrite under the form
\begin{align*}
  \frac{d^k}{d t^k} _{\mid_{t = 0}}  \left( \varphi^{\ast}_t \alpha \right)
  &\ =\
  \sum_{\lambda \vDash k} C_{\lambda}
  \left[
    \prod_{j = 1}^{l_{\lambda}} L_{\xi_0^{\left( \lambda_j - 1 \right)}}
  \right] \alpha\,,\\
  C_{\lambda} &
  \ =\  \frac{| \lambda | !}{ \prod_{j = 1}^r
    \left[
      \left(\lambda_j - 1 \right)!\ |\lambda|_j
    \right] }\,,\\
  \xi_t^{\left( k \right)} & \assign \frac{d^k \xi_t}{d t^k}\,,
\end{align*}
with the convention $0! = 1$. We will prove the more general formula
\begin{equation*}
  \frac{d^k}{d t^k} \varphi^\ast_t \ = \
  \sum_{\lambda \vDash k} C_\lambda
  \varphi_t^\ast  \prod_{j = 1}^{l_\lambda}
  L_{\xi_t^{\left( \lambda_j - 1\right)}}\,,
\end{equation*}
by induction. (Obviously the above formula is true for $k = 1$.) Taking one
more derivative we obtain thanks to (\ref{generLIE})
\begin{equation*}
  \frac{d^{k + 1}}{d t^{k + 1}} \varphi^{\ast}_t
  \ = \
  \sum_{\lambda \vDash k} C_{\lambda} \varphi_t^{\ast}
  \left[
    \sum_{s = 1}^{l_{\lambda}} \prod_{j = 1}^{l_{\lambda}}
    L_{\xi_t^{\left( \lambda_j + \delta_{j, s} - 1 \right)}}
    + L_{\xi_t} \prod_{j = 1}^{l_{\lambda}} L_{\xi_t^{\left(\lambda_j - 1
        \right)}}
  \right] .
\end{equation*}
If we identify formally the product $\varphi_t^*\prod_{j = 1}^{l_{\lambda}}
L_{\xi_t^{\left( \lambda_j - 1 \right)}}$ with the composition $\lambda$ then
the previous sum corresponds to the formal sum of compositions
\begin{equation}
\label{equ-b-compo}
S_k\ \assign\ \sum_{\lambda \vDash k} C_{\lambda}  \left[ \lambda' + \left( 1, \lambda
  \right)_{_{_{_{}}}} \right],
\end{equation}
where
\begin{alignat}{1}
  \lambda' &\ \assign \ \sum_{j = 1}^{l_{\lambda}} \left( \lambda_1, \ldots,
  \lambda_j + 1, \ldots, \lambda_{l_{\lambda}} \right),\\
  \left( 1, \lambda \right) &\ \assign \ \left( 1, \lambda_1, \ldots,
  \lambda_{l_{\lambda}} \right) .
\end{alignat}
We observe that the operation which associates to any composition $\lambda$
the components of the formal sum $\lambda' + \left( 1, \lambda \right)$,
generates all the compositions of $k + 1$. For any $\lambda\vDash k$ and
$\Lambda\vDash k+1$ let us write $\lambda\to\Lambda$ if $\Lambda=\left(
  \lambda_1, \ldots, \lambda_j + 1, \ldots, \lambda_{l_{\lambda}} \right)$ for
some $j$ or $\Lambda=(1,\lambda)$. Then Equation~\ref{equ-b-compo} rewrites as
\begin{equation}
  S_k = \sum_{\lambda \vDash k} C_{\lambda}
    \sum_{
      \substack{\Lambda \vDash k+1 \\ \lambda\to\Lambda}
} \Lambda\,.
\end{equation}
Exchanging the two sums yield
\begin{equation}
  S_k = \sum_{\Lambda \vDash k+1} \left(
    \sum_{
      \substack{\lambda \vDash k \\ \lambda\to\Lambda}} C_{\lambda}
    \right)\Lambda\,.
\end{equation}
We observe that $\Lambda\vDash k+1$ being fixed, the $\lambda \vDash k$ such
that $\lambda\to\Lambda$ are obtained either by removing $1$ in front of
$\Lambda$ (if $\Lambda$ starts with $1$) or by decreasing any component of
$\Lambda$ greater that $2$.

The conclusion of the induction will follow from the equality $C_{\Lambda} =
C'_{\Lambda}$, which rewrites in a more explicit form as
\begin{equation*}
  \frac{|\Lambda|!}{\prod_{j = 1}^{l_{\Lambda}}
    \left[ (\Lambda_j - 1)!\ |\Lambda|_j \right]}
  \ = \
  \sum_{s=1}^{l_{\Lambda}}
  \frac{(|\Lambda | - 1)!}%
       {\prod_{j=1}^{l_{\Lambda}} \left[(\Lambda_j - \delta_{s, j} - 1)_{+}!\
           \sum^j_{t = 1} (\Lambda_t - \delta_{s, t}) \right]},
\end{equation*}
where $\left( a \right)_+ \assign \max \{a, 0\}$. Symplifing the common factor
$(|\Lambda| - 1)!$, rearranging and setting $l \assign
l_{\Lambda}$ we infer that the previous equality is equivalent to
\begin{equation*}
  \sum_{s = 1}^l \Lambda_s
  \ = \ \sum_{s = 1}^l\ \prod_{j = 1}^l
  \frac{(\Lambda_j - 1)!\ \sum^j_{t = 1} \Lambda_t}%
       {\left[ (\Lambda_j - \delta_{s, j} - 1 )_{+}!\
           \sum^j_{t = 1} (\Lambda_t - \delta_{s, t}) \right] }\,.
\end{equation*}
The later rewrites in a simpler way as
\begin{equation}
  \label{keySumInduct}
  \sum_{s = 1}^l \Lambda_s = \sum_{s = 1}^l
  (\Lambda_s - 1)
  \prod_{j = s}^l \frac{\sum_{t = 1}^j \Lambda_t}{ \sum_{t = 1}^j \Lambda_t - 1}\,.
\end{equation}
We show (\ref{keySumInduct}) by induction on $l$. The equality
(\ref{keySumInduct}) is obvious for $l = 1$. We decompose the sum
\begin{align*}
  & \sum_{s = 1}^{l + 1} \left( \Lambda_s - 1 \right) \prod_{j = s}^{l + 1}
  \frac{ \sum^j_{t = 1} \Lambda_t}{ \sum^j_{t = 1} \Lambda_t - 1}
  \\
  &\ =\ \left(
  \Lambda_{l + 1} - 1 \right)  \frac{ \sum^{l + 1}_{t = 1} \Lambda_t}{ \sum^{l
  + 1}_{t = 1} \Lambda_t - 1}\\
  &\ +\ \frac{ \sum^{l + 1}_{t = 1} \Lambda_t}{ \sum^{l + 1}_{t = 1}
  \Lambda_t - 1}  \sum_{s = 1}^l \left( \Lambda_s - 1 \right) \prod_{j = s}^l
  \frac{ \sum^j_{t = 1} \Lambda_t}{ \sum^j_{t = 1} \Lambda_t - 1} \\
  &\ =\ \left( \Lambda_{l + 1} - 1 \right)  \frac{ \sum^{l + 1}_{t = 1}
  \Lambda_t}{ \sum^{l + 1}_{t = 1} \Lambda_t - 1} + \frac{ \sum^{l +
  1}_{t = 1} \Lambda_t}{ \sum^{l + 1}_{t = 1} \Lambda_t - 1}  \sum_{s = 1}^l
  \Lambda_s\,,
\end{align*}
by the inductive assumption. We conclude
\begin{align*}
  \sum_{s = 1}^{l + 1} (\Lambda_s - 1)
  \prod_{j = s}^{l + 1}
     \frac{\sum_{t = 1}^j\Lambda_t}{\sum_{t = 1}^j \Lambda_t - 1}
  &\ =\ \left(
    \sum^{l + 1}_{t = 1} \Lambda_t - 1
  \right)
  \frac{ \sum_{t = 1}^{l + 1}\Lambda_t}{\sum^{l + 1}_{t = 1} \Lambda_t - 1} \\
  &\ =\ \sum_{t = 1}^{l + 1} \Lambda_t\,,
\end{align*}
which is the equality (\ref{keySumInduct}) for $l + 1$.

\paragraph{Combinatorial proof.}
We give now a second more combinatorial proof of the formula giving the
$C_\lambda$. Recall that we encoded the product
\begin{equation}
\varphi_t^*\prod_{j = 1}^{l_{\lambda}} L_{\xi_t^{\left( \lambda_j - 1 \right)}}\,,
\end{equation}
with the composition $\lambda$. We denote $D$ the linear operator acting on
formal linear combination of compositions defined by
\begin{equation}
  D(\lambda) := \lambda' + (1,\lambda) = \sum_{\lambda\to\Lambda} \Lambda\,,
\end{equation}
where we defined the relation $\to$ by $\lambda\to\Lambda$ if
$\Lambda=\left( \lambda_1, \ldots, \lambda_j + 1, \ldots,
  \lambda_{l_{\lambda}} \right)$ for some $j$ or $\Lambda=(1,\lambda)$.  Then
$$
\frac{d}{dt}\left(\varphi_t^*\prod_{j = 1}^{l_{\lambda}}
L_{\xi_t^{\left( \lambda_j - 1 \right)}}\right)
$$
is encoded by $D(\lambda)$. So that to compute $\frac{d^{k}}{d t^{k}}
\varphi^{\ast}_t$ we need to compute the coefficients $c_k$ of the expansion
of $D^k(())=\sum_{\lambda\vDash k} c_\lambda\lambda$ where $()$ denotes the
empty composition of length $0$. The relation $\to$ is
illustrated in Figure~\ref{graphe}, together with the coefficient $c_\lambda$.

By definition of $D$ the coefficient $c_\Lambda$ of
each node $\Lambda$ is the sum of the coefficients of its antecedents by the
relation $\lambda\to\Lambda$. As a consequence it is equal to the number of
pathes from $()$ to $\Lambda$, that is sequences
\begin{equation*}
()=\lambda^0\to\lambda^1\to\lambda^2\to\dots\to\lambda^k=\Lambda
\end{equation*}
such that the relation $\lambda^i\to\lambda^{i+1}$ holds for any $i$ such that
$0\leq i< k$.
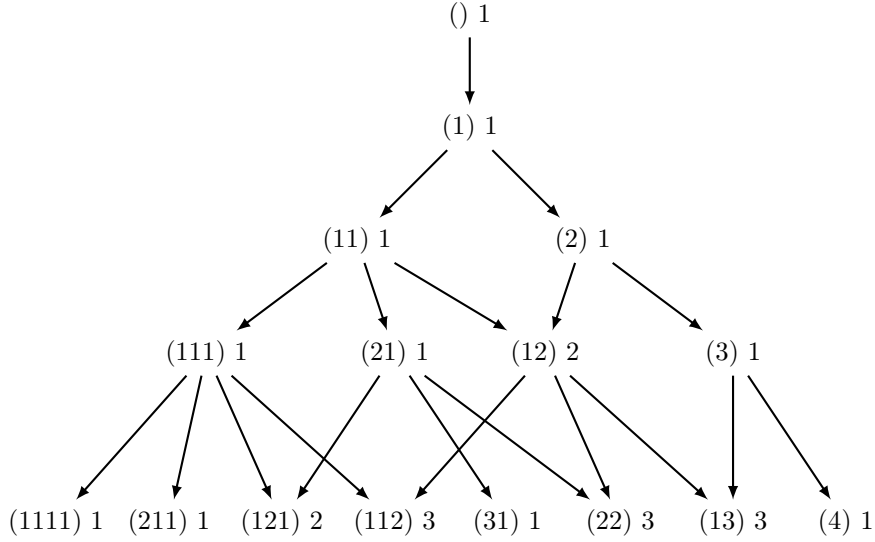
\begin{figure}[ht]
\[
\begin{tikzpicture}[yscale=1.5]
  \node (0) at (5.5, 5) {$()\ 1$};
  \node (1) at (5.5, 4) {$(1)\ 1$};
  \node (11) at (4, 3) {$(11)\ 1$};
  \node (2) at (7, 3) {$(2)\ 1$};
  \node (111) at (2, 2) {$(111)\ 1$};
  \node (21) at (4.5, 2) {$(21)\ 1$};
  \node (12) at (6.5, 2) {$(12)\ 2$};
  \node (3) at (9, 2) {$(3)$\ 1};
  \node (1111) at (0, 0.5) {$(1111)$\ 1};
  \node (211) at (1.5, 0.5) {$(211)$\ 1};
  \node (121) at (3, 0.5) {$(121)$\ 2};
  \node (112) at (4.5, 0.5) {$(112)$\ 3};
  \node (31) at (6, 0.5) {$(31)$\ 1};
  \node (22) at (7.5, 0.5) {$(22)$\ 3};
  \node (13) at (9, 0.5) {$(13)$\ 3};
  \node (4) at (10.5, 0.5) {$(4)$\ 1};
  \tikzstyle{fils}=[->,thick,>=latex]
  \draw[fils] (0)--(1);
  \draw[fils] (1)--(11);
  \draw[fils] (1)--(2);
  \draw[fils] (11)--(111);
  \draw[fils] (11)--(21);
  \draw[fils] (11)--(12);
  \draw[fils] (2)--(12);
  \draw[fils] (2)--(3);
  \draw[fils] (111)--(1111);
  \draw[fils] (111)--(211);
  \draw[fils] (111)--(121);
  \draw[fils] (111)--(112);
  \draw[fils] (21)--(121);
  \draw[fils] (21)--(31);
  \draw[fils] (21)--(22);
  \draw[fils] (12)--(112);
  \draw[fils] (12)--(22);
  \draw[fils] (12)--(13);
  \draw[fils] (3)--(13);
  \draw[fils] (3)--(4);
\end{tikzpicture}
\]
\caption{\label{graphe}The
  relation $\lambda\to\Lambda$ and the coefficient $c_\lambda$}
\end{figure}

The striking observation is that the number of such paths starting from
$()$ to any composition $\Lambda$ of sum $k$ is equal to the number of
partitions of the set $[k]=\{1,\dots,k\}$. Recall that a partition of a
set $S$ is a set $\Pi = \{\Pi_1, \dots, \Pi_r\}$ of non empty disjoints sets
$\Pi_i$ whose union is $S$. The number of partition of $[k]$ is know as the
$k$-th Bell numbers. The first values are
$$
1, 1, 2, 5, 15, 52, 203, 877, 4140, 21147,
$$
and one can check on Figure~\ref{graphe} that the sum of the coefficient of
the composition of sum $4$ is indeed $15$. So we can expect that there is a
bijection between the set of paths from $()$ to $\Lambda$ and the set of
partition $\Pi$ of $k$ verifying certain constraints depending on $\Lambda$.

We now describe such a constraint. First of all, we need a unique compact way
to write a partition $\Pi=\{\Pi_1, \dots, \Pi_r\}$ of $k$. So we write the
element of each $\Pi_i$ in the natural order, separating the $\Pi_i$ by
vertical bars $|$ and sorting the $\Pi_i$ among themselves according to their
\emph{largest} element. We call this ordering $(\Pi_1, \dots, \Pi_r)$ the
\emph{normal ordering}. For example $\{\{6\}, \{1\}, \{7, 2, 4\}, \{5,
3\}\}$ which is a partition of $[7]$ is rather written in the order $\{\{1\},
\{3, 5\} , \{6\}, \{2, 4, 7\}\}$ in the compact way $1|35|6|247$.
\begin{definition}
  Given a partition $\Pi$ of $k$ with normal ordering $(\Pi_1,\dots,\Pi_r)$,
  we call the \emph{shape} of $\Pi$ and denote $\shape(\Pi)$ the composition
  $(|\Pi_1|, \dots, |\Pi_r|)$.
\end{definition}
For example $\shape(1|35|6|247)=(1,2,1,3)$. Then we claim that
\begin{proposition}\label{prop-coeff-part}
  For any composition $\Lambda\vDash k$, the coefficient $c_\Lambda$ is the
  number of partitions $\Pi$ such that $\shape(\Pi)=\Lambda$.
\end{proposition}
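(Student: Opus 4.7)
The plan is to show that the number $n_\Lambda$ of partitions of $[k]$ of shape $\Lambda$ satisfies the same recurrence as $c_\Lambda$, and to conclude by induction on $k = |\Lambda|$. The base case $k = 0$ is trivial: $c_{()} = n_{()} = 1$.

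For the recurrence on the $c$-side, recall that $c_\Lambda = \sum_{\lambda \to \Lambda} c_\lambda$ by construction of $D$. Inverting the relation $\to$, the predecessors of $\Lambda$ are precisely: for each $j$ with $\Lambda_j \geq 2$, the composition $\Lambda^{(j)} \assign (\Lambda_1, \ldots, \Lambda_j - 1, \ldots, \Lambda_l)$ obtained by decrementing the $j$-th part; and, if $\Lambda_1 = 1$, the composition $(\Lambda_2, \ldots, \Lambda_l)$ obtained by removing the leading $1$. These candidates are pairwise distinct (the removal predecessor is shorter; two decrement predecessors at distinct positions differ at those positions), so
\begin{equation*}
  c_\Lambda\ =\ \varepsilon(\Lambda)\, c_{(\Lambda_2, \ldots, \Lambda_l)}\ +\ \sum_{j:\, \Lambda_j \geq 2} c_{\Lambda^{(j)}}\,,
\end{equation*}
where $\varepsilon(\Lambda) = 1$ if $\Lambda_1 = 1$ and $0$ otherwise.

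The combinatorial heart of the proof is to establish the analogous recurrence for $n_\Lambda$ by deleting the element $1$ from a partition $\Pi$ of $[k]$ of shape $\Lambda$ written in normal order $(\Pi_1, \ldots, \Pi_l)$. Let $\Pi_j$ be the block containing $1$. If $\Pi_j = \{1\}$, then the max of $\Pi_j$ equals $1$ and is the smallest possible, which forces $j = 1$ and $\Lambda_1 = 1$; deleting $\{1\}$ and relabeling $i \mapsto i - 1$ yields a normally ordered partition of $[k-1]$ of shape $(\Lambda_2, \ldots, \Lambda_l)$. Otherwise $|\Pi_j| \geq 2$, so the max of $\Pi_j$ strictly exceeds $1$ and is unaltered by the removal of $1$, whence the normal ordering is preserved; relabeling then produces a normally ordered partition of $[k-1]$ of shape $\Lambda^{(j)}$, with $j$ ranging freely over all indices satisfying $\Lambda_j \geq 2$. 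The reverse constructions—prepending $\{1\}$, or inserting $1$ into the $j$-th block—are disjoint and visibly bijective, so $n_\Lambda$ satisfies precisely the recurrence displayed above, and the induction closes.

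The point requiring care is the matching of the two families of predecessors across the two sides. Concretely, one must verify that on the partition side the ``prepend'' case occurs exactly when $\Lambda_1 = 1$ (which follows from the fact that a singleton $\{1\}$ has the smallest possible max and thus lands in position~$1$ of the normal ordering), and that the ``decrement at position $j$'' case is subject to no restriction beyond $\Lambda_j \geq 2$ (which follows because removing the non-maximal element $1$ never disturbs the ordering by max). Once these checks are made, the inductive step is immediate.
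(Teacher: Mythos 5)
Your proof is correct and is essentially the paper's argument: your operation of deleting the element $1$ and decrementing labels is exactly the paper's $\Pi\mapsto\Pi^-$, and your case analysis of where $1$ sits (singleton block, necessarily first in the normal order, versus a block of size $\geq 2$ whose max is undisturbed) reproduces the paper's unique-lift step in its bijection between partitions of shape $\Lambda$ and paths from $()$ to $\Lambda$. The only difference is presentational — you phrase it as two structures satisfying the same recurrence rather than as an explicit bijection with paths — which amounts to the same induction.
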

Before going to the proof we need some extra combinatorial ingredients. Given
a partition $\Pi$ of $k>0$ we denote $\Pi^-$ the partition of $k-1$ obtained
by decreasing by $1$ all the numbers in the elements of $\Pi$, removing the
obtained $0$ and its set as well if it is a singleton. We moreover define
$\Pi^{(n)}$ by $\Pi^{(0)}=\Pi$ and $\Pi^{(n)}=(\Pi^{(n-1)})^-$. For example
$1|35|6|247^-=24|5|136$ and $345|26|17^-=234|15|6$. Here is the sequence
$1|35|6|247^{(n)}$ for $n=0,\dots 7$, where the second row gives the shapes:
\[\begin{array}{cccccccc}
1|35|6|247 & 24|5|136 & 13|4|25 & 2|3|14 & 1|2|3 & 1|2 & 1 & \emptyset \\
(1,2,1,3) & (2,1,3)   & (2,1,2) & (1,1,2) & (1,1,1) & (1,1) & (1) & ()
\end{array}\]
We notice the following obvious lemma which we just illustrated.
\begin{lemma}\label{lemma-link}
  For all $\Pi$ partition of $k>0$, $\shape(\Pi^-)\to\shape(\Pi)$.
\end{lemma}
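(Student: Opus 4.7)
The plan is to prove the lemma by a straightforward case analysis on the role of the element $1$ in the partition $\Pi$. The two cases correspond exactly to the two ways the relation $\to$ can be produced: prepending a $1$ versus incrementing a component.

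First I would fix $\Pi$ a partition of $[k]$ with normal ordering $(\Pi_1,\dots,\Pi_r)$, and split according to whether $\{1\}$ is a singleton block of $\Pi$ or whether $1$ belongs to a block of size at least two. In the first case, the block $\{1\}$ has largest element $1$, which is strictly smaller than the largest element of any other block (those have largest element at least $2$), so in the normal ordering $\Pi_1=\{1\}$. Passing to $\Pi^-$ deletes this singleton and shifts every element of the remaining blocks down by one; the relative order of the largest elements of the surviving blocks is preserved by the shift, so the normal ordering of $\Pi^-$ is $(\Pi_2-1,\dots,\Pi_r-1)$ and therefore $\shape(\Pi^-)=(|\Pi_2|,\dots,|\Pi_r|)$. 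Hence $\shape(\Pi)=(1,\shape(\Pi^-))$, which is precisely the prepending relation $\shape(\Pi^-)\to\shape(\Pi)$.

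In the second case, let $j$ be the index such that $1\in\Pi_j$ and $|\Pi_j|\geqslant 2$. The largest element $m_j$ of $\Pi_j$ satisfies $m_j\geqslant 2$, so in $\Pi^-$ the block $\Pi_j$ becomes $(\Pi_j\setminus\{1\})-1$, which is non-empty with largest element $m_j-1\geqslant 1$, and its size decreases by exactly $1$. Every other block $\Pi_i$ becomes $\Pi_i-1$ with unchanged size and largest element $m_i-1$. Since the map $x\mapsto x-1$ is order-preserving on $\{m_1,\dots,m_r\}$, the positions of the blocks in the normal ordering of $\Pi^-$ coincide with those in $\Pi$. Consequently $\shape(\Pi^-)$ differs from $\shape(\Pi)$ only in its $j$-th component, which is decreased by one; equivalently, $\shape(\Pi)$ is obtained from $\shape(\Pi^-)$ by adding one to its $j$-th component, giving $\shape(\Pi^-)\to\shape(\Pi)$.

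There is no real obstacle here; the only subtle point is the observation that when $1$ is a singleton block it must sit in position one of the normal ordering, and when it is in a larger block the shift by $-1$ preserves the ordering of the largest elements so that no re-sorting occurs. Once these two facts are in hand the two cases line up exactly with the two moves defining $\to$.
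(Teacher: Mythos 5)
Your proof is correct. The paper states this lemma without proof, calling it ``obvious'' and merely illustrating it with the sequence $1|35|6|247^{(n)}$, and your two-case analysis (whether $\{1\}$ is a singleton block, forcing it into position one of the normal ordering and yielding the prepending move, or $1$ lies in a larger block, yielding the increment move) together with the observation that shifting by $-1$ preserves the normal ordering is exactly the verification the paper leaves implicit.
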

We now turn to the proof of Proposition~\ref{prop-coeff-part}.

\begin{proof}
  Let's denote $Q_\Lambda:= \{ \Pi \mid \shape(\Pi)=\Lambda\}$ and $P_\Lambda$
  the set of paths
  \begin{equation*}
    ()=\lambda^0\to\lambda^1\to\lambda^2\to\dots\to\lambda^k=\Lambda
  \end{equation*}
  from $()$ to $\Lambda$. To prove than
  $|Q_\Lambda|=|P_\Lambda|$ we define a bijection $F:Q_\Lambda\to P_\Lambda$
  by
  \begin{equation}
    F(\Pi) = (\shape(\Pi^{(k)}), \shape(\Pi^{(k-1)}),
    \dots, \shape(\Pi^{(1)}), \shape(\Pi^{(0)}))\,.
  \end{equation}
  Observe that $F(\Pi)$ is obtained by appending $\shape(\Pi)$ to $F(\Pi^-)$.
  Thanks to Lemma~\ref{lemma-link}, this prove that $F(\Pi)\in P_\Lambda$. We
  now need to show that $F$ is a bijection, that is, given a path
  \begin{equation*}
    p = (()=\lambda^0\to\lambda^1\to\lambda^2\to\dots\to\lambda^k=\Lambda)\,,
  \end{equation*}
  we need to show that there is a unique partition $\Theta$ such that
  $F(\Theta) = p$. We proceed by induction on $k$. First, for $k=0$, we
  observe that there is only one partition of shape $()$, namely the empty
  partition $\emptyset$. Now suppose that $\Pi=(\Pi_1,\dots,\Pi_r)$ is the
  unique partition such that $F(\Pi)=(\lambda^0,\dots,\lambda^k)$. We only
  need to show that there is a unique partition $\Theta$ such that
  $\Theta^-=\Pi$ and $\shape(\Theta)=\lambda^{k+1}$. Recall that if
  $\lambda^k\to\lambda^{k+1}$, they are two possibilities:
  \begin{itemize}
  \item Either $\lambda^{k+1}= (1, \lambda^k)$, in this case, the only
    possible $\Theta$ is
    \[
    \Theta = (\{1\}, \Pi_1+1,\dots,\Pi_r+1)\,,
    \]
    where for any set $S$ of integers, $S+1:=\{i+1\mid i\in S\}$.
  \item Or writing $\lambda^{k}= (\lambda^{k}_1,\dots,\lambda^{k}_{l})$ there
    exists $j\leq l$ such that
    \[
    \lambda^{k+1}=
    (\lambda^{k}_1,\dots,\lambda^{k}_j+1,\dots,\lambda^{k}_{l})\,.
    \]
    Since $\lambda^{k}=\shape(\Pi)$, it makes sense to define
    \[
    \Theta = (\Pi_1+1,\dots,\{1\}\cup(\Pi_j+1), \dots\Pi_r+1)\,.
    \]
    and again it is the only possibility.
  \end{itemize}
  This conclude the proof by induction on $k$.
\end{proof}

To finish the combinatorial proof of Formula~(\ref{PullExpansion}), we still need
to prove the following:
\begin{proposition}
  For any composition $\lambda\vDash k$, the number of partition $\Pi$ of
  $[k]$ of shape $\lambda$ is given by
  \begin{equation}
    c_{\lambda}\ =\ \frac{| \lambda | !}{ \prod_{j = 1}^r \left[ \left(
          \lambda_j - 1 \right)_{_{_{_{}}}}!\ |\lambda|_j \right] }\,.
\end{equation}
\end{proposition}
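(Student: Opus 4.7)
The plan is to induct on the length $r = l_\lambda$ of the composition by peeling off the rightmost block $\Pi_r$ of a partition in normal ordering. The base case $r = 1$ forces $\lambda = (k)$, so the unique partition $\{[k]\}$ has shape $(k)$ and the formula evaluates to $\frac{k!}{(k-1)!\,k} = 1$, as required.

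For the inductive step, I would first observe that the maxima of the blocks $(\Pi_1,\dots,\Pi_r)$ in normal ordering are strictly increasing, so $\max \Pi_r$ equals the overall maximum $k$; that is, $k \in \Pi_r$. A partition of $[k]$ of shape $\lambda$ is therefore uniquely specified by the following data: (i) a choice of the remaining $\lambda_r - 1$ elements of $\Pi_r$ from $[k-1]$, giving $\binom{k-1}{\lambda_r - 1}$ options; and (ii) a partition of the remaining $k - \lambda_r$ elements of $[k-1]$ into blocks of sizes $\lambda_1,\dots,\lambda_{r-1}$ in normal ordering. Transporting such a partition along the unique order-preserving bijection onto $[k-\lambda_r]$ preserves maxima, hence normal ordering, so by induction the number of such partitions equals $c_{(\lambda_1,\dots,\lambda_{r-1})}$. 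This yields the recursion
$$c_\lambda \ =\ \binom{k-1}{\lambda_r - 1}\, c_{(\lambda_1,\dots,\lambda_{r-1})}.$$

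Invoking the inductive hypothesis, noting that $|\lambda|_j$ for $j \le r-1$ does not change when the last part $\lambda_r$ is dropped, and expanding the binomial coefficient, one obtains
$$c_\lambda \ =\ \frac{(k-1)!}{(\lambda_r - 1)! \prod_{j=1}^{r-1}\left[(\lambda_j - 1)!\,|\lambda|_j\right]}.$$
The final algebraic step uses the identity $|\lambda|_r = k$: multiplying numerator and denominator by $k$ promotes $(k-1)!$ to $k!$ and supplies precisely the missing factor $|\lambda|_r$ in the denominator, yielding the claimed formula. There is no real obstacle here; the only point worth double-checking is that the unique order-preserving relabeling of a subset of $[k-1]$ onto $[k-\lambda_r]$ preserves the normal ordering, which is immediate since that ordering depends only on the relative order of the maxima of the blocks.
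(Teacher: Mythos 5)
Your proof is correct and follows essentially the same route as the paper: both arguments induct on the length of the composition, peel off the last block $\Pi_r$ (which must contain $k$ by the normal ordering), count its remaining elements via $\binom{k-1}{\lambda_r-1}$, and reduce to a smaller shape by order-preserving relabeling. The only cosmetic difference is that the paper starts the induction at the empty composition $r=0$ rather than $r=1$.
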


\begin{proof}
  We proceed by induction on the length $r$ of any composition $\lambda$ of
  any sum $k$. If $r=0$, then
  $\lambda=()$, and the denominator product is empty so that $c_\lambda=1$
  which is correct since the only partition is the empty one. Now to choose a
  partition $\Pi=\{\Pi_1, \dots, \Pi_{r+1}\}$ of shape
  $\lambda=(\lambda_1,\dots, \lambda_{r+1})$ of $k$, we first need to choose
  the elements which belongs to $\Pi_{r+1}$ (which must contains at least $k$
  due to the normal ordering). To get the correct shape, there must be
  $\lambda_{r+1}-1$ elements different from $k$ in $\Pi_{r+1}$ that must be
  chosen in $[k-1]=[|\lambda|-1]$. So the number of such choices is
  \begin{align*}
    \binom{|\lambda|-1}{\lambda_{r+1}-1}
    &\ =\
    \frac{(|\lambda|-1)!}{(\lambda_{r+1}-1)!(\lambda_1+\dots+\lambda_{r})!} \\
%    & =  \frac{(|\lambda|-1)!}{(\lambda_{r+1}-1)!|\lambda|_r!} \\
    &\ =\  \frac{|\lambda|!}{(\lambda_{r+1}-1)!|\lambda|_{r+1}|\lambda|_r!}\,,
  \end{align*}
  since $|\lambda|=|\lambda|_{r+1}$.  We need now to choose a partition
  $\Theta$ of shape $\mu:=(\lambda_1,\dots,\lambda_{r})$ of the remaining
  numbers.  By naturally renumbering them, there are as many choices for
  $\Theta$ as partitions of $[|\lambda|_r]$ of shape
  $(\lambda_1,\dots,\lambda_{r})=\mu$. By induction they are $c_\mu$ of
  them. We therefore obtain
  \begin{equation}
    \frac{|\lambda|!}{(\lambda_{r+1}-1)!|\lambda|_{r+1}|\lambda|_r!}\,
    \frac{| \lambda |_r !}{ \prod_{j = 1}^r \left[ \left(
          \lambda_j - 1 \right)_{_{_{_{}}}}!\ |\lambda|_j \right] }\,,
  \end{equation}
  which simplifies to the announced result.
\end{proof}
\begin{remark}
  The shape map $\shape$ and the coefficients $c_\lambda$ have a nice
  algebraic interpretation in terms of combinatorial Hopf
  algebras~\cite{HNT-Comm}. Indeed set partition index the monomial basis
  $(\mathbf{M}_\Pi)$ of the algebra \textbf{WSym} of symmetric function in
  non-commutative variables. Then the operation $\Pi\mapsto\Pi^-$, is encoded
  in the non-commutative product of \textbf{WSym} as
  \begin{equation}
    \mathbf{M}_{\{\{1\}\}}\mathbf{M}_\Pi
    = \sum_{\Theta\ |\ \Pi=\Theta^-}    \mathbf{M}_{\Theta}\,.
  \end{equation}
  Then~\cite[Section 3.7]{HNT-Comm} consider a quotient of \textbf{WSym} by the
  so-called stalactic congruence. To match our setting we need the right sided
  stalactic congruence defined by
  \begin{equation}
    a\ w\ a \equiv w\ a\ a
  \end{equation}
  for all $a\in A$ and $w\in A^*$. This quotient amount to identify
  $\mathbf{M}_\Pi$ and $\mathbf{M}_\Theta$ if and only if
  $\shape(\Pi)=\shape(\Theta)$ leading naturally to a base $(N_\lambda)$ of
  the quotient. As a consequence our $c_\lambda$ are nothing but the
  coefficients of the expansion
  \begin{equation}
    \frac{1}{1-N_{(1)}} =\sum_\lambda c_\lambda N_\lambda\,.
  \end{equation}
\end{remark}
\section{Multiple covariant derivatives and trees}

Let $X$ be a smooth manifold and let $\nabla$ be a covariant derivative
operator acting on the smooth sections of the tangent bundle $T_X$. We will
still denote by $\nabla$ its natural extension over tensors. For any subset $S
\subset \N_{>0}$ we consider a family of vector fields
$\left( \xi_p \right)_{p \in S}$ and a smooth section $A$ of the tensor bundle
$\left( T^{\ast}_X \right)^{\otimes q} \otimes T^{\otimes p}_X$.

It is known since Cayley~\cite{Cayley1857} that trees are the right tool to
manipulate nested iterated derivative (see also~\cite{Manchon,HNT-DMTCS}). He
actually invented the very notion of tree for that exact purpose. In this
paper we will have to deal with expression such as
\begin{equation*}
  \nabla^3_{\nabla^1_{\xi_2}{\xi_3}
    \otimes \xi_5
    \otimes
    \nabla^2_{\xi_1\otimes\xi_4}{\xi_6}}A
  \ \equiv\
  \left(
    \left(\xi_2\neg\nabla^1{\xi_3}\right)
    \otimes \xi_5
    \otimes
    \left((\xi_1\otimes\xi_4)\neg\nabla^2{\xi_6}\right)
  \right)\neg\nabla^3A\,.
\end{equation*}
We will manipulate them using trees. For example the previous expression is
much easier to read if written as in the left of
Figure~\ref{fig-def-tree}. Moreover, since there is a lot of redundant
information such as the $\nabla^i$ and the $\xi$, we will reduce it to the
right of Figure~\ref{fig-def-tree}. We remark that contrary to nature we
picture trees growing from top to bottom.
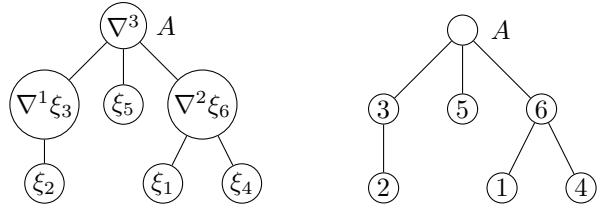
\begin{figure}[h]
  \centering
  \[
\begin{tikzpicture}[scale=0.7, inner sep=0.3mm]
  \node[right=0.4cm] {$A$};
  \node[circle, draw] (RT) {$\nabla^3$}
  child {node[circle, draw] {$\nabla^1{\xi_3}$}
    child {node[circle, draw] {$\xi_2$}}
  }
  child {node[circle, draw] {$\xi_5$}}
  child {node[circle, draw] {$\nabla^2{\xi_6}$}
    child {node[circle, draw] {$\xi_1$}}
    child {node[circle, draw] {$\xi_4$}}
  };
\end{tikzpicture}
\qquad\qquad
\begin{tikzpicture}[scale=0.7, inner sep=0.3mm, minimum size=0.4cm]
  \node[right=0.3cm] {$A$};
  \node[circle, draw] {}
  child {node[circle, draw] {$3$}
    child {node[circle, draw] {$2$}}
  }
  child {node[circle, draw] {$5$}}
  child {node[circle, draw] {$6$}
    child {node[circle, draw] {$1$}}
    child {node[circle, draw] {$4$}}
  };
\end{tikzpicture}
\]
\caption{A nested derivative and its corresponding tree}
\label{fig-def-tree}
\end{figure}

We now define formally the kinds of tree we need in this paper. We remark that
as depicted in Figure~\ref{fig-def-tree}, they are no repeated labels in our
trees so that we don't have to distinguish between a node and its label. If we
orient the edge bottom-up, such a tree is just a graph of a partial function
which is loopless. Moreover, for reasons that will become apparent latter, our
trees have some order requirement which ensure the loopless property.
\begin{definition}
  Let $S$ be a finite totally ordered set. A (rooted, labeled)
  \emph{strictly decreasing forest} $F$ on $S$ is a partially defined function
  $F: S\to S$ such that for any $\nu$ where $F$ is defined $F(\nu)>\nu$ holds.

  Elements of $S$ are called \emph{nodes} of $F$. Nodes where $F$ is not
  defined are called \emph{roots} of $F$. A forest $T$ with only one root is
  called a \emph{tree}, in this case we denote the root $\Root{T}$.

  The node $F(\nu)$ is called the \emph{father} of $\nu$.  The preimages of
  $\mu\in F^{-1}(\nu)$ by $F$ are called the \emph{children} of $\nu$ and we
  denote their set $\child[F]{\nu}$ or even $\child{\nu}$ if $F$ is clear from
  the context and their number by $\nchild[F]{\nu}$ or
  $\nchild{\nu}$. Finally, When depicting a tree, we always draw the children
  in \emph{increasing order from left to right}.

\end{definition}
Note that the condition ensure that a non-empty forest must have at least one
root, namely $\max S$.

We now fix some more terminology and notations: we often write the forest $F$
for its underlying set $S$ such as in $\nu\in F$ meaning $\nu\in S$. For a
tree $T$, we also write $\nu\in T^*$ for $\nu\in T\setminus\{\Root{T}\}$.

We say that $\nu$ is a \emph{brother} of $\mu$, if they have the same
father. Also $\nu$ is a \emph{descendant} (resp. a \emph{strict descendant})
of $\mu$ if there is a $i\geq0$ (resp. $i>0$) such that $\mu=F^i(\nu)$. Given
$\mu\in S$, the set of descendant of $\mu$ determines a natural tree called the
\emph{subtree} of $F$ rooted at $\mu$ and denoted $F_\mu$. A tree $T$ of a
forest $F$ is a subtree whose root $\Root{T}$ is also a root of the forest
$F$. We denote their set with $\treeforest{F}$. Of course there are as many
trees in a forest as roots, their number is denoted by $l_F$. We denote
$F^\dagger$ the forest obtained from $F$ by removing the tree rooted at its
maximal element.

In this paper, most of the forest and trees will have their set of nodes
contained in $\N_{>0}\cup\{\emptyroot\}$. When $\emptyroot$ is present
it is the largest element and thus a root. When a root is $\emptyroot$, we
don't draw it and simply draw and empty node in the picture. For
$S\subset\N_{>0}$, we denote by $T \in \Trees{S}$, the set of trees with set
of nodes
$S^\emptyroot\assign S\cup\{\emptyroot\}$.
\bigskip

\begin{example} We show below a forest $F$. The tree $F_3$ is a subtree of
  $F$, and $F_8$ is both a subtree and a tree of $F$, that is
  $F_8\in\treeforest{F}$. We also examplify $F^\dagger$.
\[
  F = \left[\
 \begin{tikzpicture}[baseline=-20, scale=0.4, inner sep=0.3mm]
    \node[circle, draw] {$4$}
    child {node[circle, draw] {$1$}};
  \end{tikzpicture}\
  \begin{tikzpicture}[baseline=-20, scale=0.4, inner sep=0.3mm]
    \node[circle, draw] {$5$};
  \end{tikzpicture}\
  \begin{tikzpicture}[baseline=-20, scale=0.4, inner sep=0.3mm]
    \node[circle, draw] {$8$}
    child {node[circle, draw] {$6$}};
  \end{tikzpicture}\
  \begin{tikzpicture}[baseline=-20, scale=0.4, inner sep=0.3mm]
    \node[circle, draw] {\phantom{$\circ$}}
    child {node[circle, draw] {$3$}
      child {node[circle, draw] {$2$}}
    }
    child {node[circle, draw] {$7$}};
  \end{tikzpicture}
  \ \right]
  \qquad
  F_3 =
  \begin{tikzpicture}[baseline=-10, scale=0.4, inner sep=0.3mm]
    \node[circle, draw] {$3$}
    child {node[circle, draw] {$2$}};
  \end{tikzpicture}
  \qquad
  F_8 =
  \begin{tikzpicture}[baseline=-10, scale=0.4, inner sep=0.3mm]
    \node[circle, draw] {$8$}
    child {node[circle, draw] {$6$}};
  \end{tikzpicture}
  \qquad
  F^\dagger = \left[\
 \begin{tikzpicture}[baseline=-10, scale=0.4, inner sep=0.3mm]
    \node[circle, draw] {$4$}
    child {node[circle, draw] {$1$}};
  \end{tikzpicture}\
  \begin{tikzpicture}[baseline=-10, scale=0.4, inner sep=0.3mm]
    \node[circle, draw] {$5$};
  \end{tikzpicture}\
  \begin{tikzpicture}[baseline=-10, scale=0.4, inner sep=0.3mm]
    \node[circle, draw] {$8$}
    child {node[circle, draw] {$6$}};
  \end{tikzpicture}
  \ \right]
\]
\end{example}
It is well known that the number of forest with set of node any given set of
cardinality $k$ is $k!$ (see. \cite{OEIS} sequence
\href{http://oeis.org/A000142}{A000142}). A bijection is given
in~\cite[Section 3.2]{HNT-DMTCS}.

\begin{definition}\label{def-tree}
  For any subset $S \subset \N$ we consider a family of vector fields
  $\left( \xi_j \right)_{j \in S}$ and a smooth section $A$ of the tensor
  bundle $\left( T^{\ast}_X \right)^{\otimes q} \otimes T^{\otimes h}_X$, with
  $q, h \geqslant 0$. For any tree $T$ with set of nodes contained
  in $S^\emptyroot$ we define the nested derivative $\nabla_{\xi_{\bullet}}^T
  \xi_{\Root{T}}$ of $\xi_{\Root{T}}$, with $\xi_{\emptyroot} \assign A$ by the
  inductive formula
  \begin{equation*}
    \nabla_{\xi_{\bullet}}^T \xi_{\Root{T}}
    \ \assign \
    \left( \bigotimes_{\nu \in \child{\Root{T}}}
      \nabla^{T_{\nu}}_{\xi_{\bullet}}
      \xi_{\Root{T_{\nu}}} \right)
    \neg \nabla^{l_{\Root{T}}} \xi_{\Root{T}}\,.
  \end{equation*}
  In particular if $S^\circ$ is the set of nodes of $T$, then
  \begin{align*}
    \nabla_{\xi_{\bullet}}^T A
    &\ =\
    \left( \bigotimes_{\nu \in \child{\Root{T}}}
      \nabla^{T_{\nu}}_{\xi_{\bullet}} \xi_{\nu}
    \right)
    \neg \nabla^{l_{\Root{T}}} A\,,\\
    \nabla^{T_{\nu}}_{\xi_{\bullet}} \xi_{\nu}
    &\ =\
    \left( \bigotimes_{n \in \child{\nu}}
      \nabla^{T_n}_{\xi_{\bullet}} \xi_n \right)
    \neg \nabla^{l_{\nu}} \xi_{\nu}\,.
  \end{align*}
\end{definition}
See Figure~\ref{fig-def-tree} for an example.

If we now apply recursively the chain rule
\begin{equation}
  \nabla_{\xi} \nabla^k_{\Xi_1\otimes\dots\otimes\Xi_k}A =
  \nabla^{k+1}_{\xi\otimes\Xi_1\otimes\dots\otimes\Xi_k}A +
  \sum_{j=1}^k\nabla^k_{\Xi_1\otimes\dots\otimes\nabla_\xi\Xi_j\otimes\dots\otimes\Xi_k}A
\end{equation}
to a tree, it writes as
\begin{equation}
  \nabla_{\xi_j}\nabla_{\xi_{\bullet}}^T A =
  \sum_{U}\nabla_{\xi_{\bullet}}^U A\,,
\end{equation}
where the sum goes along the set of trees $U$ obtained by grafting $j$ to the
left of any nodes of $T$. As a consequence, there are as many terms in this sum
as nodes of $T$. We give here an example where a tree $T$ stands for
$\nabla^T_{\xi_{\bullet}}$:
\begin{equation*}
\nabla_{\xi_1}\left(
\begin{tikzpicture}[baseline=-20, scale=0.4, inner sep=0.3mm]
  \node[circle, draw] {\phantom{$A$}}
  child {node[circle, draw] {$3$}}
  child {node[circle, draw] {$6$}
    child {node[circle, draw] {$2$}}
  }
  ;
\end{tikzpicture}\,A\right)
\ =\
\begin{tikzpicture}[baseline=-20, scale=0.4, inner sep=0.3mm]
  \node[circle, draw] {\phantom{$A$}}
  child {node[circle, draw] {$1$}}
  child {node[circle, draw] {$3$}}
  child {node[circle, draw] {$6$}
    child {node[circle, draw] {$2$}}
  }
  ;
\end{tikzpicture}\,A
\ +\
\begin{tikzpicture}[baseline=-20, scale=0.4, inner sep=0.3mm]
  \node[circle, draw] {\phantom{$A$}}
  child {node[circle, draw] {$3$}
    child {node[circle, draw] {$1$}}
  }
  child {node[circle, draw] {$6$}
    child {node[circle, draw] {$2$}}
  }
  ;
\end{tikzpicture}\,A
\ +\
\begin{tikzpicture}[baseline=-20, scale=0.4, inner sep=0.3mm]
  \node[circle, draw] {\phantom{$A$}}
  child {node[circle, draw] {$3$}}
  child {node[circle, draw] {$6$}
    child {node[circle, draw] {$1$}}
    child {node[circle, draw] {$2$}}
  }
  ;
\end{tikzpicture}A
\ +\
\begin{tikzpicture}[baseline=-20, scale=0.4, inner sep=0.3mm]
  \node[circle, draw] {\phantom{$A$}}
  child {node[circle, draw] {$3$}}
  child {node[circle, draw] {$6$}
    child {node[circle, draw] {$2$}
      child {node[circle, draw] {$1$}}
    }
  }
  ;
\end{tikzpicture}\,A\,.
\end{equation*}
Note that if $T$ is strictly decreasing ordered and if $i$ is smaller than any
nodes of $T$ then all the trees appearing in this sum are strictly decreasing
ordered.

Applying iteratively this rule, since there is a unique way to get a strictly
decreasing tree adding nodes one by one in the decreasing order, we get the
multiple covariant derivative of $A$:
\begin{equation}\label{multDeriv}
  \left(\prod_{j \in S} \nabla_{\xi_j} \right) A =
  \sum_{T\in\Trees{S}} \nabla_{\xi_{\bullet}}^T A\,.
\end{equation}
With respect to a Sobolev norm $\| \cdot \|_r$ we infer the inequality
\begin{equation}
  \label{estMultDer}
  \left\| \left( \prod_{j \in S} \nabla_{\xi_j} \right) A \right\|_r
  \leqslant C_r^{\left| S \right|}
  \sum_{T\in\Trees{S}} \left\| \nabla^{\nchild{\Root{T}}} A \right\|_r \cdot
  \prod_{\nu \in T^*} \| \nabla^{\nchild{\nu}} \xi_{\nu} \|_r\,.
\end{equation}
We notice indeed the identity $\left| S \right| = \sum_{\nu \in T} \nchild{\nu}$
for any $T\in\Trees{S}$.

\begin{remark}
  The computation made there are very reminiscent to prelie
  computation. Indeed, it is well know that given a flat torsion-free
  connections and $\nabla$ its associated covariant derivative, the bilinear
  operator $X \triangleright Y := \nabla_X Y$ endow the space of vector fields
  with a left pre-Lie algebra structure (see~\cite[Proposition
  3.1]{Manchon}). However in our case, the connection is not flat, so that we
  can't apply the pre-Lie calculus. Still, we can use Cayley trees by ensuring
  that when computing $\nabla_X Y$, the field $X$ is always a single leaf and
  not a proper tree.
\end{remark}

\section{Multiple Lie derivatives of endomorphism sections of the tangent
bundle}

We notice now that for any $A$ smooth endomorphism section of $T_X$ and any
torsion free connection $\nabla$ hold the identity
\begin{equation*}
  L_{\xi} A\ =\ \nabla_{\xi} A + [A, \nabla \xi]\,.
\end{equation*}
We consider the operator $\ad\left( A \right) \assign [A, \bullet]$
acting on endomorphism sections of $T_X$. Then the previous identity rewrites
also as $L_{\xi} = \nabla_{\xi} - \ad\left( \nabla \xi \right)$ over
the space of smooth endomorphism sections of $T_X$. In order to generalize
this fomula to multiple Lie derivatives we need to introduce a few notations.
We denote by $\Part_k$ the set of partitions of the set $[k]$. We notice
that for any $P\in\Part_k$ there exists a unique $p \in P$ such that
$\max p = k$. We denote $p_k$ such $p$. We denote by $P^{\ast} \assign P
\smallsetminus \{p_k \}$. Moreover for any $p \in P$ we denote $p^{\ast}
\assign p \smallsetminus \{\max p\}$. Given a family of smooth vector fields
$\left( \xi_j \right)^k_{j = 1}$ and a subset $S \subset [k]$ we denote by
\begin{equation*}
  \nabla_{\xi_{\bullet}}^S \ \assign \ \prod_{j \in S} \nabla_{\xi_j}\,,
\end{equation*}
where the product is taken in the increasing order from the left to the
right. This notation will only be used in this section.
\begin{lemma}
  Let $\nabla$ be the extension to tensor sections of any torsion free connection. Then for any family of smooth
  vector fields $\left( \xi_j \right)^k_{j = 1}$ the formula holds
  \begin{equation}
    \label{multipLIE}
    \prod_{j = 1}^k L_{\xi_j} =
    \sum_{P \in \Part_{k + 1}}
    (-1)^{|P| - 1}
    \left[ \prod_{p \in P^\ast}
      \ad \left( \nabla_{\xi_\bullet}^{p^\ast}
        \nabla \xi_{\max p} \right)
    \right] \nabla_{\xi_{\bullet}}^{p^{\ast}_{k + 1}}\,,
  \end{equation}
  over the space of smooth endomorphism sections of $T_X$. The product on the
  right hand side is taken in the increasing order provided by the max of the
  elements of $P$, from the left to the right.
\end{lemma}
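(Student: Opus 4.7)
The plan is to induct on $k$. The base case $k = 1$ reduces to the identity $L_\xi = \nabla_\xi - \ad(\nabla\xi)$ on endomorphism sections of $T_X$, which itself follows from torsion-freeness $[\xi, Y] = \nabla_\xi Y - \nabla_Y \xi$: for any vector field $Y$,
\[
(L_\xi A)(Y) = [\xi, AY] - A[\xi, Y] = (\nabla_\xi A)(Y) - \nabla_{AY}\xi + A\,\nabla_Y \xi = (\nabla_\xi A)(Y) + [A, \nabla\xi](Y).
\]
Since $\Part_2$ has exactly the two elements $\{\{1,2\}\}$ and $\{\{1\},\{2\}\}$, the right-hand side at $k=1$ equals $\nabla_{\xi_1} - \ad(\nabla\xi_1) = L_{\xi_1}$, matching this identity.

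For the inductive step, I assume the formula at rank $k$ and multiply on the right by $L_{\xi_{k+1}} = \nabla_{\xi_{k+1}} - \ad(\nabla\xi_{k+1})$. The $\nabla_{\xi_{k+1}}$ piece simply extends the trailing factor $\nabla_{\xi_\bullet}^{p_{k+1}^\ast}$ into $\nabla_{\xi_\bullet}^{p_{k+1}^\ast \cup \{k+1\}}$, and bijectively matches the partition $P' \in \Part_{k+2}$ obtained from $P \in \Part_{k+1}$ by adjoining $k+2$ to the block containing $k+1$. The $-\ad(\nabla\xi_{k+1})$ piece requires commuting the operator $\nabla_{\xi_\bullet}^{p_{k+1}^\ast}$ past $\ad(\nabla\xi_{k+1})$.

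The key commutation identity is that $\nabla_\xi$ acts as a derivation for the bracket, giving $\nabla_\xi \circ \ad(B) = \ad(B) \circ \nabla_\xi + \ad(\nabla_\xi B)$. Iterating this on an increasing set $S = \{s_1 < \cdots < s_m\}$ yields, by a short subordinate induction on $|S|$,
\[
\nabla_{\xi_\bullet}^S\, \ad(B) = \sum_{T \subseteq S} \ad(\nabla_{\xi_\bullet}^T B)\, \nabla_{\xi_\bullet}^{S \setminus T},
\]
where both the inner and outer products respect the increasing order inherited from $S$. Applied with $S = p_{k+1}^\ast$ and $B = \nabla\xi_{k+1}$, each pair $(P, T)$ with $T \subseteq p_{k+1}^\ast$ corresponds to the partition $P'' \in \Part_{k+2}$ obtained by splitting $p_{k+1} \cup \{k+2\}$ into the two blocks $T \cup \{k+1\}$ and $(p_{k+1}^\ast \setminus T) \cup \{k+2\}$. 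Since $k+1$ exceeds every max appearing in $P^\ast$, the new factor $\ad(\nabla_{\xi_\bullet}^T \nabla\xi_{k+1})$ lands at the rightmost position in the ordered product of ads, which is exactly the position prescribed by the maxes in $(P'')^\ast = P^\ast \cup \{T \cup \{k+1\}\}$. The sign becomes $-(-1)^{|P|-1} = (-1)^{|P''|-1}$ since $|P''| = |P| + 1$.

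Combining both pieces, each partition of $[k+2]$ arises exactly once, depending on whether $k+1$ and $k+2$ share a block (first piece) or lie in distinct blocks (second piece); in the latter case the inverse map merges the blocks containing $k+1$ and $k+2$ and then deletes $k+2$, recovering $(P,T)$ from $P''$. The main obstacle in this plan is the bookkeeping around the commutation step: proving the displayed iterated-derivation identity and then verifying that the resulting position of the new ad factor coincides with the max-ordering prescribed by the lemma. Once these are checked, the partition identification completes the induction.
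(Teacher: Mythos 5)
Your argument is correct, but it runs the induction in the mirror image of the paper's. The paper peels off the \emph{smallest} index, writing $\prod_{j=1}^{k+1}L_{\xi_j}=L_{\xi_1}\cdot\prod_{j=2}^{k+1}L_{\xi_j}$ and applying the inductive hypothesis to the right-hand factor (partitions of $\{2,\dots,k+2\}$); the inductive step then needs only the one-step Leibniz rule for $\nabla_{\xi_1}$ across the whole product of $\ad$'s, so that the $\nabla_{\xi_1}$ piece inserts $1$ into one of the existing blocks (landing automatically at the leftmost slot of the corresponding $\nabla_{\xi_\bullet}^{p^\ast}$, consistent with the increasing-order convention since $1$ is minimal), while the $-\ad(\nabla\xi_1)$ piece creates the singleton block $\{1\}$, placed first among the $\ad$'s because its max is minimal; the resulting decomposition of $\Part_{k+2}$ is by whether $\{1\}$ is a singleton block. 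Your right-multiplication by $L_{\xi_{k+1}}$ instead forces the commutation of $\ad(\nabla\xi_{k+1})$ past the trailing $\nabla_{\xi_\bullet}^{p_{k+1}^\ast}$, hence the iterated identity $\nabla_{\xi_\bullet}^S\ad(B)=\sum_{T\subseteq S}\ad(\nabla_{\xi_\bullet}^T B)\,\nabla_{\xi_\bullet}^{S\setminus T}$ (a sum over $2^{|S|}$ subsets rather than over $|P|$ blocks), and your decomposition of $\Part_{k+2}$ is by whether $k+1$ and $k+2$ share a block. Your ordering and sign checks are right: the new block $T\cup\{k+1\}$ has the largest max among the blocks of $(P'')^\ast$, so its $\ad$ factor correctly sits rightmost, and $|P''|=|P|+1$ absorbs the minus sign. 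Both routes rest on the same two ingredients (torsion-freeness for $L_\xi=\nabla_\xi-\ad(\nabla\xi)$, and the derivation property $\nabla_\xi\circ\ad(B)=\ad(\nabla_\xi B)+\ad(B)\circ\nabla_\xi$); the paper's choice of the minimal element buys a lighter inductive step with no subset sum, while yours is equally valid once the displayed commutation identity is established, which is indeed the routine subordinate induction you indicate.
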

Before giving the proof, we provide an example of a summand in the
right-hand-side sum. We pick for $k=8$ the set partition
\begin{equation*}
P=\{\{2,3\},\{1,4,6\},\{7\},\{5,8,9\}\}\,.
\end{equation*}
The associated summand is
\begin{equation*}
  (-1)^{3}
  \ad \left( \nabla_{\xi_2} \nabla\xi_3 \right)
  \ad \left( \nabla_{\xi_1}\nabla_{\xi_4} \nabla\xi_6 \right)
  \ad \left( \nabla\xi_7 \right)
  \nabla_{\xi_5}\nabla_{\xi_8}\,.
\end{equation*}
\begin{proof}
  We show the formula (\ref{multipLIE}) by induction on $k$. By the inductive
  assumption
  \begin{equation*}
    \prod_{j = 2}^{k + 1} L_{\xi_j}\ =\
    \sum_{P \in \Part_{2, k + 2}} (-1)^{|P| - 1}
    \left[ \prod_{p \in P^{\ast}}
      \ad \left( \nabla_{\xi_{\bullet}}^{p^{\ast}} \nabla \xi_{\max p}
      \right)
    \right] \nabla_{\xi_{\bullet}}^{p^{\ast}_{k + 2}}\,,
  \end{equation*}
  where $\Part_{2, k + 2}$ denotes the set of partitions of the set
  $\{2, \ldots, k + 2\}$. For any $P \in \Part_{2, k + 2}$ we denote by
  $\Part_P \subset \Part_{k + 2}$ the subset of partitions
  obtained by adding $1$ to one of the parts $p \in P$. We denote by $P_1
  \assign \{\{1\}\} \cup P$. Then
  \begin{align*}
    \nabla_{\xi_1} \prod_{j = 2}^{k + 1} L_{\xi_j}
    &\ =\
    \sum_{P \in \Part_{2, k + 2}}
    (-1)^{|P| - 1}
    \nabla_{\xi_1} \left[ \prod_{p \in P^{\ast}}
      \ad\left(\nabla_{\xi_{\bullet}}^{p^{\ast}} \nabla \xi_{\max p} \right)
    \right]
    \nabla_{\xi_{\bullet}}^{p^{\ast}_{k + 2}} \\
    &\ =\
    \sum_{P \in \Part_{2, k + 2}}
    (-1)^{|P| - 1} \sum_{P' \in \Part_P}
    \left[ \prod_{p' \in P'^{\ast}}
      \ad \left( \nabla_{\xi_{\bullet}}^{p'^{\ast}} \nabla \xi_{\max p'}
      \right)
    \right]
    \nabla_{\xi_{\bullet}}^{p'^{\ast}_{k + 2}}\\
    &\ = \
    \sum_{P \in \Part_{2, k + 2}} \sum_{P' \in \Part_P}
    (-1)^{|P'| - 1}
    \left[ \prod_{p' \in P'^{\ast}}
      \ad \left( \nabla_{\xi_{\bullet}}^{p'^{\ast}} \nabla \xi_{\max p'} \right)
    \right]
    \nabla_{\xi_{\bullet}}^{p'^{\ast}_{k + 2}}\,,
  \end{align*}
  and
  \begin{align*}
    &-\ad \left( \nabla \xi_1 \right) \prod_{j = 2}^{k + 1} L_{\xi_j} \\
    &\ =\ \sum_{P \in \Part_{2, k + 2}} (-1)^{\left| P \right|} \ad \left(
      \nabla \xi_1 \right) \left[ \prod_{p \in P^{\ast}} \ad \left(
        \nabla_{\xi_{\bullet}}^{p^{\ast}} \nabla
        \xi_{\max p} \right) \right] \nabla_{\xi_{\bullet}}^{p^{\ast}_{k + 2}} \\
    &=\ \sum_{P \in \Part_{2, k + 2}} (-1)^{\left| P_1 \right| - 1} \left[
      \prod_{p_1 \in P_1^{\ast}} \ad \left(
        \nabla_{\xi_{\bullet}}^{p_1^{\ast}} \nabla \xi_{\max p_1} \right)
    \right] \nabla_{\xi_{\bullet}}^{p^{\ast}_{1, k + 2}}\,.
  \end{align*}
  The conclusion
  \begin{equation*}
    \prod_{j = 1}^{k + 1} L_{\xi_j}
    \ = \
    \sum_{P \in \Part_{k + 2}} (-1)^{|P| - 1}
    \left[
      \prod_{p \in P^{\ast}}
      \ad \left( \nabla_{\xi_{\bullet}}^{p^{\ast}} \nabla \xi_{\max p} \right)
    \right]
    \nabla_{\xi_{\bullet}}^{p^{\ast}_{k + 2}}\,,
  \end{equation*}
  follows from the fact that
  \begin{equation*}
    \Part_{k + 2}\ =\
    \left[ \bigsqcup_{P \in \Part_{2, k + 2}} \Part_P \right]
    \ \bigsqcup\
    \left[ \bigsqcup_{P \in \Part_{2, k + 2}} \{P_1 \} \right]\,.\qedhere
  \end{equation*}
\end{proof}

Let $\Forests_{k^\circ}$ be the set of forest over $[k]\cup\{\circ\}$.  Recall
that, given a forest $F\in\Forests_{k^\circ}$ we denote $F_\nu$ the subtree rooted at
$\nu$. We also denote by $l_F$ the number of trees of $F$. Recall also that
$F^{\dagger}$ is the forest obtained by truncating from $F$ the tree
$F_\circ$. Then Formula~(\ref{multipLIE}) combined with~(\ref{multDeriv}) implies
\begin{equation}
  \label{multipLIEforest}
  \prod_{j = 1}^k L_{\xi_j}\ =\
  \sum_{F \in \Forests_{k^\circ}} (-1)^{l_F - 1}
  \left[
    \prod_{T \in F^{\dagger}}
    \ad\left( \nabla_{\xi_{\bullet}}^{T} \nabla \xi_{\Root{T}} \right)
  \right]
  \nabla_{\xi_{\bullet}}^{F_\circ}\,,
\end{equation}
where the product is taken in the increasing order provided by the labels of
the roots of $F^{\dagger}$, from the left to the right. Recall that for a tree
$T$, we write $\nu\in T^*$ for $\nu\in T\setminus\{\Root{T}\}$. Then
formula (\ref{multipLIEforest}) and the inequality (\ref{estMultDer}) imply
the estimate
\begin{eqnarray}
  \left\| \left( \prod_{j = 1}^k L_{\xi_j} \right) A \right\|_r & \leqslant &
  C^k_r  \sum_{F \in \Forests_{k^\circ}} 2^{l_F - 1} \left( \left\|
      \nabla^{\nchild{\circ}} A \right\|_r \cdot \prod_{\nu \in F_{\circ}^*} \| \nabla^{\nchild{\nu}} \xi_{\nu} \|_r \right) \times\nonumber
  \\
  \nonumber
  &  & \\
  & \times & \prod_{T \in F^{\dagger}} \left( \left\| \nabla^{1 + \nchild{\Root{T}}}
      \xi_{\Root{T}} \right\|_r \cdot \prod_{\nu \in T^{\ast}} \|
    \nabla^{\nchild{\nu}} \xi_{\nu} \|_r \right) ,\label{estmLieJ}
\end{eqnarray}
where the power of two comes from the estimate of the $\ad$ operator.

\section{Forests and Dyck polynomials}
We define the monomial $X_T$ associated to a tree $T$ as
\begin{equation*}
  X_T \ \assign \ \prod_{\nu \in T} X^{\nchild{\nu} + \delta_{\Root{T},
  \nu}}_{\nu}\,,
\end{equation*}
and the root-truncated monomial $X_T^{\ast}$ as
\begin{equation*}
  X_T^{\ast} \ \assign \ \prod_{\nu \in T^{\ast}} X^{\nchild{\nu}}_{\nu}\,.
\end{equation*}
\begin{example}
  Considering the following tree
  \[T\ =\
  \begin{tikzpicture}[baseline=-20, scale=0.5, inner sep=0.3mm, minimum size=0.4cm]
  \node[circle, draw] {$9$}
  child {node[circle, draw] {$3$}
    child {node[circle, draw] {$2$}}
  }
  child {node[circle, draw] {$6$}}
  child {node[circle, draw] {$8$}
    child {node[circle, draw] {$1$}}
    child {node[circle, draw] {$5$}}
  };
\end{tikzpicture}\,,
\]
one finds $X_T=X_1^0X_2^0X_3^1X_5^0X_6^0X_8^2X_9^4$ and
$X_T^*=X_1^0X_2^0X_3^1X_5^0X_6^0X_8^2$.

\end{example}
Let $B^l \assign \| \nabla^l A\|_r$ and $X^{\nchild{\nu}}_{\nu} \assign \|
\nabla^{\nchild{\nu}} \xi_{\nu} \|_r$ in the estimate (\ref{estmLieJ}).  The
reader have to be careful that $X_\nu^0= \|\xi_{\nu}\| \neq 1$. Then the sum on
the right hand side of (\ref{estmLieJ}) rewrites as
\begin{equation*}
  \Sigma_k \ \assign \ \sum_{F \in \Forests_{k^\circ}} 2^{l_F - 1}
  B^{\nchild[F]{\circ}}
  X^{\ast}_{F_{\circ}} \prod_{T \in F^{\dagger}} X_T\,.
\end{equation*}
With this notations we state the following theorem.

\begin{theorem}
  \label{TheoLieTree}The sum $\Sigma_k$ satisfies the polynomial expression
  \begin{equation*}
    \Sigma_k \ \equiv \ \Sigma \left( X_1, \ldots, X_k \right) =
    \sum_{P \in \Dyck(k)} C_P B^{D_{P, k}} X^{p_1}_1 \cdots X^{p_k}_k\,.
  \end{equation*}
\end{theorem}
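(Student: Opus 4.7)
The plan is to show, for each fixed Dyck vector $P = (p_1, \ldots, p_k) \in \Dyck(k)$, that the weighted count
\begin{equation*}
N_P \ \assign \ \sum_{F\in\Forests_{k^\circ}} 2^{l_F - 1}\,,
\end{equation*}
taken over those $F$ producing the monomial $B^{D_{P,k}} X_1^{p_1}\cdots X_k^{p_k}$ in $\Sigma_k$, equals $C_P$. To begin, I would unpack the definitions of $X^{\ast}_{F_\circ}$ and $X_T$ to see that, for any $F$, the exponent of $B$ is $\nchild[F]{\emptyroot}$ and, for each $j\in[k]$, the exponent of $X_j$ is $\nchild[F]{j}+\varepsilon_j(F)$, where $\varepsilon_j(F)$ is $1$ if $j$ is a root of $F^{\dagger}$ and $0$ otherwise.

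The main combinatorial quantity is the function
\begin{equation*}
\delta_F(j) \ \assign \ \left|\{r\in[j]\,:\, F(r)\in\{j+1,\dots,k,\emptyroot\}\}\right|\,,
\end{equation*}
i.e.\ the number of nodes of $[j]$ whose parent in $F$ lies strictly above $j$. A short counting using $\sum_{\nu\in F}\nchild[F]{\nu}=k+1-l_F$ and the contribution of the roots of $F^{\dagger}$ lying in $[j]$ shows $\delta_F(j) = j - (p_1+\cdots+p_j)$; in other words $\delta_F(j) = D_{P,j}$. Since $\delta_F(j)\geq 0$, this already recovers the fact that the exponent vector of any $F$ automatically lies in $\Dyck(k)$.

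Next I construct $F$ node by node, processing $j=1,2,\dots,k$ in increasing order. At the start of stage $j$ there is a distinguished set of pending nodes, namely those $r\in[j-1]$ whose parent has not yet been chosen and which have not been declared to be roots of $F^{\dagger}$; by the identity $\delta_F(j-1)=D_{P,j-1}$ this set has exactly $D_{P,j-1}$ elements. At stage $j$ there are two mutually exclusive options: declare $j$ to be a root of $F^{\dagger}$, which forces $\nchild[F]{j}=p_j-1$ and contributes a weight $2$ (one factor from $2^{l_F-1}$) together with $\binom{D_{P,j-1}}{p_j-1}$ ways to pick the children of $j$ among the pending nodes; or declare $j$ itself to be pending, which forces $\nchild[F]{j}=p_j$, has weight $1$, and offers $\binom{D_{P,j-1}}{p_j}$ choices of children. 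The weighted number of extensions at stage $j$ is therefore
\begin{equation*}
2\binom{D_{P,j-1}}{p_j-1}+\binom{D_{P,j-1}}{p_j}\,,
\end{equation*}
which is precisely the $j$-th factor of $C_P$.

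After stage $k$ the $D_{P,k}$ remaining pending nodes must attach to $\emptyroot$, producing the factor $B^{D_{P,k}}$ with no further freedom. Multiplying the contributions from all stages yields $N_P=C_P$, and summing over $P$ completes the proof. The convention $\binom{m}{n}=0$ for $n\notin\{0,\dots,m\}$ automatically enforces the boundary constraint $0\leq p_j\leq D_{P,j-1}+1$ required for the construction to succeed at each stage. The main obstacle is establishing the invariant $\delta_F(j)=D_{P,j}$ cleanly, since once this identification is in place the whole proof reduces to the two-case enumeration above, and the product structure of $C_P$ falls out naturally from processing the nodes one at a time.
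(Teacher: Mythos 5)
Your proof is correct, and the combinatorial core is the same as the paper's: both arguments reduce to the fact that, at label $j$, a forest contributing $X^P$ either has $j$ as a root of $F^{\dagger}$ (weight $2$, with $p_j-1$ children to choose) or not (weight $1$, with $p_j$ children to choose), the choices being drawn from a set of exactly $D_{P,j-1}$ available nodes. The bookkeeping, however, is genuinely different. The paper proceeds by induction on $k$, peeling off the largest label via the pruning-and-grafting operation $F\mapsto F'$ and fibering $\Forests_P$ over $\Forests_{p_1,\dots,p_{k-1}}$; the identification of the available set as having $D_{P,k-1}$ elements is then the content of Lemmas~\ref{IdMon} and~\ref{degId}. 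You instead build $F$ in a single increasing pass over $j=1,\dots,k$ and take the available set to be the \emph{pending} nodes of $[j-1]$ (those whose parent will lie above $j$), whose cardinality $\delta_F(j-1)=D_{P,j-1}$ is an invariant independent of the choices already made --- this independence is what lets the weighted count factor directly into $\prod_{j=1}^k\left[2\binom{D_{P,j-1}}{p_j-1}+\binom{D_{P,j-1}}{p_j}\right]$ with no induction and no auxiliary forest operation. The one spot stated tersely is the invariant itself: for a completed forest it is exactly the paper's identity showing that $X_F$ is a Dyck monomial, and for the partial construction it holds because, writing $a_r$ for the indicator that $r$ was declared a root, the pending count after stage $j-1$ is $\sum_{r<j}(1-a_r)-\sum_{r<j}(p_r-a_r)=(j-1)-\sum_{r<j}p_r$, the $a_r$ cancelling. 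With that spelled out, your argument is complete and arguably more transparent as a direct weighted enumeration, while the paper's version trades this for the reusable structural lemmas about $F\mapsto F'$.
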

\begin{example}
  We give here the first few values of $\Sigma_k$.
  % sage: latex(formuleFast(2, vars, var('B')))
  \begin{align*}
    \Sigma_1 &= B\,X^{(0)} + 2\,X^{(1)}\,, \\
    \Sigma_2 &= B^{2}\,X^{(0,0)} + 2\,B\,X^{(1,0)} + 3\,B\,X^{(0,1)} +
    4\,X^{(1,1)} + 2\,\,X^{(0,2)}\,,\\
    \Sigma_3 &=
    B^{3}\,X^{(0,0,0)} + 2\,B^{2}\,X^{(1,0,0)} + 3\,B^{2}\,X^{(0,2,0)} +
    4\,B\,X^{(1,1,0)} + 2\,B\,X^{(0,1,0)} \\
    &+ 4\,B^{2}\,X^{(0,0,1)} + 6\,B\,X^{(1,0,1)}
    + 9\,B\,X^{(0,1,1)} + 8\,X^{(1,1,1)} + 4\,X^{(0,2,1)} + \\
    &+ 5\,B\,X^{(0,0,2)} + 4\,X^{(1,0,2)} + 6\,X^{(0,1,2)} + 2\,X^{(0,0,3)}\,.
  \end{align*}
\end{example}
From theorem \ref{TheoLieTree} we infer the estimate
\begin{equation}
  \label{multLieDeriv}  \left\| \left( \prod_{j = 1}^k L_{\xi_j} \right) A
  \right\|_r \leqslant C^k_r  \sum_{P \in \tmop{Dyck} \left( k \right)} C_P
  \| \nabla^{D_{P, k}} A\|_r  \prod_{j = 1}^k \| \nabla^{p_j} \xi_j \|_r\,.
\end{equation}
\bigskip

We need a few preliminaries in order to show theorem \ref{TheoLieTree}.
Recall that a Dyck vector of length $k$ is a sequence $P \equiv ( p_1,\dots,
p_k) \in \N^k$ such that
\begin{equation}\label{def-dyck}
  \sum_{1 \leqslant r \leqslant j} p_r \leqslant j\quad \text{ for all $j \in [k]$\,.}
\end{equation}
We denote their set by $\Dyck(k)$. A monomial $X^P=X_1^{p_1}\cdots X_k^{p_k}$
is \emph{Dyck monomial} if $P$ is a Dyck vector.

Recall also that for any $F \in \Forests_{k^\circ}$ we denote by $F_\nu$ the tree
rooted at $\nu$ and by $F^\dagger$ the forest $F \smallsetminus \{F_\circ
\}$.  For any forest $F \in \Forests_{k^\circ}$ we define
\begin{equation*}
  X_F \ \assign\ X^{\ast}_{F_\circ} \prod_{T \in F^\dagger} X_T\,.
\end{equation*}
See Example~\ref{forest-prime} below. We observe that $X_F$ is always a Dyck
monomial $X^P$. Indeed let
\begin{equation*}
  p_j = \nchild[F]j + \delta_{j,\operatorname{Root}(F)} \,.
\end{equation*}
for all $j = 1, \ldots, k$ where $\delta_{j,\operatorname{Root}(F)}$ is $1$ if
$j$ is a root of $F$ and $0$ otherwise.
Then Condition~(\ref{def-dyck}) follows from the obvious identity
\begin{align*}
  \sum_{1 \leqslant r \leqslant j} p_r
  &\ =\  \left| \{\nu \in F \cap [j] \mid
  \nu \text{ child in } F \cap [j]\} \right|\\
  &\ +\ \left| \{\nu \in F \cap [j] \mid \nu \text{ root in } F\} \right|,
\end{align*}
and the fact that
\begin{equation*}
  \{\nu \in F \cap [j] \mid \nu \text{ child in } F \cap [j]\}\ \sqcup\
  \{\nu \in F \cap [j] \mid \nu \text{ root in } F\} \ \subseteq \ [j]\,.
\end{equation*}
For any $P \in \Dyck(k)$, we define the set
\begin{equation*}
  \Forests_P\ \assign\ \left\{ F \in \Forests_{k^\circ} \mid X_F = X^P \right\}.
\end{equation*}

\begin{example}
%%%%%%%%%%%%%%%%%%%%%%%%%%%%%%%%%%%%%%%%%%%%%%%%%%%%%%
We fix $P=(0, 0, 2, 1, 1)$. The following picture show all the forests
such that $X_F=X^P$, sorted according to their length.
\begin{gather*}
\left[{
\newcommand{\nodea}{\node[draw,circle] (a) {\phantom{$6$}};}
\newcommand{\nodeb}{\node[draw,circle] (b) {$5$};}
\newcommand{\nodec}{\node[draw,circle] (c) {$4$};}
\newcommand{\noded}{\node[draw,circle] (d) {$3$};}
\newcommand{\nodee}{\node[draw,circle] (e) {$1$};}
\newcommand{\nodef}{\node[draw,circle] (f) {$2$};}
\begin{tikzpicture}[baseline=0, inner sep=0.3mm]
\matrix[column sep=.3cm, row sep=.3cm,ampersand replacement=\&]{
         \& \nodea  \&         \\ 
         \& \nodeb  \&         \\ 
         \& \nodec  \&         \\ 
         \& \noded  \&         \\ 
 \nodee  \&         \& \nodef  \\
};
\path[ultra thick] (d) edge (e) edge (f)
	(c) edge (d)
	(b) edge (c)
	(a) edge (b);
\end{tikzpicture}}\right]
\\[3mm] %%%%%%%%%%%%%%%%%%%%%%%%%%%%%%%%%%%%
\left[{ \newcommand{\nodea}{\node[draw,circle] (a) {$4$}
;}\begin{tikzpicture}[baseline=0, inner sep=0.3mm]
\matrix[column sep=.3cm, row sep=.3cm,ampersand replacement=\&]{
 \nodea  \\
};
\end{tikzpicture}}\,{ \newcommand{\nodea}{\node[draw,circle] (a) {\phantom{$6$}}
;}\newcommand{\nodeb}{\node[draw,circle] (b) {$5$}
;}\newcommand{\nodec}{\node[draw,circle] (c) {$3$}
;}\newcommand{\noded}{\node[draw,circle] (d) {$1$}
;}\newcommand{\nodee}{\node[draw,circle] (e) {$2$}
;}\begin{tikzpicture}[baseline=0, inner sep=0.3mm]
\matrix[column sep=.3cm, row sep=.3cm,ampersand replacement=\&]{
         \& \nodea  \&         \\ 
         \& \nodeb  \&         \\ 
         \& \nodec  \&         \\ 
 \noded  \&         \& \nodee  \\
};
\path[ultra thick] (c) edge (d) edge (e)
	(b) edge (c)
	(a) edge (b);
\end{tikzpicture}}\right]
%%%%%%%%%%%%%%%%%%%%%%%%%%%%%%%%%%%%
\left[{ \newcommand{\nodea}{\node[draw,circle] (a) {$5$}
;}\begin{tikzpicture}[baseline=0, inner sep=0.3mm]
\matrix[column sep=.3cm, row sep=.3cm,ampersand replacement=\&]{
 \nodea  \\
};
\end{tikzpicture}}\,{ \newcommand{\nodea}{\node[draw,circle] (a) {\phantom{$6$}}
;}\newcommand{\nodeb}{\node[draw,circle] (b) {$4$}
;}\newcommand{\nodec}{\node[draw,circle] (c) {$3$}
;}\newcommand{\noded}{\node[draw,circle] (d) {$1$}
;}\newcommand{\nodee}{\node[draw,circle] (e) {$2$}
;}\begin{tikzpicture}[baseline=0, inner sep=0.3mm]
\matrix[column sep=.3cm, row sep=.3cm,ampersand replacement=\&]{
         \& \nodea  \&         \\ 
         \& \nodeb  \&         \\ 
         \& \nodec  \&         \\ 
 \noded  \&         \& \nodee  \\
};
\path[ultra thick] (c) edge (d) edge (e)
	(b) edge (c)
	(a) edge (b);
\end{tikzpicture}}\right]
%%%%%%%%%%%%%%%%%%%%%%%%%%%%%%%%%%%%
\left[{ \newcommand{\nodea}{\node[draw,circle] (a) {$3$}
;}\newcommand{\nodeb}{\node[draw,circle] (b) {$1$}
;}\begin{tikzpicture}[baseline=0, inner sep=0.3mm]
\matrix[column sep=.3cm, row sep=.3cm,ampersand replacement=\&]{
 \nodea  \\ 
 \nodeb  \\
};
\path[ultra thick] (a) edge (b);
\end{tikzpicture}}\ { \newcommand{\nodea}{\node[draw,circle] (a) {\phantom{$6$}}
;}\newcommand{\nodeb}{\node[draw,circle] (b) {$5$}
;}\newcommand{\nodec}{\node[draw,circle] (c) {$4$}
;}\newcommand{\noded}{\node[draw,circle] (d) {$2$}
;}\begin{tikzpicture}[baseline=0, inner sep=0.3mm]
\matrix[column sep=.3cm, row sep=.3cm,ampersand replacement=\&]{
 \nodea  \\ 
 \nodeb  \\ 
 \nodec  \\ 
 \noded  \\
};
\path[ultra thick] (c) edge (d)
	(b) edge (c)
	(a) edge (b);
\end{tikzpicture}}\right]
%%%%%%%%%%%%%%%%%%%%%%%%%%%%%%%%%%%%
\left[{ \newcommand{\nodea}{\node[draw,circle] (a) {$3$}
;}\newcommand{\nodeb}{\node[draw,circle] (b) {$2$}
;}\begin{tikzpicture}[baseline=0, inner sep=0.3mm]
\matrix[column sep=.3cm, row sep=.3cm,ampersand replacement=\&]{
 \nodea  \\ 
 \nodeb  \\
};
\path[ultra thick] (a) edge (b);
\end{tikzpicture}}\ { \newcommand{\nodea}{\node[draw,circle] (a) {\phantom{$6$}}
;}\newcommand{\nodeb}{\node[draw,circle] (b) {$5$}
;}\newcommand{\nodec}{\node[draw,circle] (c) {$4$}
;}\newcommand{\noded}{\node[draw,circle] (d) {$1$}
;}\begin{tikzpicture}[baseline=0, inner sep=0.3mm]
\matrix[column sep=.3cm, row sep=.3cm,ampersand replacement=\&]{
 \nodea  \\ 
 \nodeb  \\ 
 \nodec  \\ 
 \noded  \\
};
\path[ultra thick] (c) edge (d)
	(b) edge (c)
	(a) edge (b);
\end{tikzpicture}}\right]
\\[3mm] %%%%%%%%%%%%%%%%%%%%%%%%%%%%%%%%%%%%
\left[{ \newcommand{\nodea}{\node[draw,circle] (a) {$4$}
;}\begin{tikzpicture}[baseline=0, inner sep=0.3mm]
\matrix[column sep=.3cm, row sep=.3cm,ampersand replacement=\&]{
 \nodea  \\
};
\end{tikzpicture}}\,{ \newcommand{\nodea}{\node[draw,circle] (a) {$5$}
;}\begin{tikzpicture}[baseline=0, inner sep=0.3mm]
\matrix[column sep=.3cm, row sep=.3cm,ampersand replacement=\&]{
 \nodea  \\
};
\end{tikzpicture}}\,{ \newcommand{\nodea}{\node[draw,circle] (a) {\phantom{$6$}}
;}\newcommand{\nodeb}{\node[draw,circle] (b) {$3$}
;}\newcommand{\nodec}{\node[draw,circle] (c) {$1$}
;}\newcommand{\noded}{\node[draw,circle] (d) {$2$}
;}\begin{tikzpicture}[baseline=0, inner sep=0.3mm]
\matrix[column sep=.3cm, row sep=.3cm,ampersand replacement=\&]{
         \& \nodea  \&         \\ 
         \& \nodeb  \&         \\ 
 \nodec  \&         \& \noded  \\
};
\path[ultra thick] (b) edge (c) edge (d)
	(a) edge (b);
\end{tikzpicture}}\right]
%%%%%%%%%%%%%%%%%%%%%%%%%%%%%%%%%%%%
\left[{ \newcommand{\nodea}{\node[draw,circle] (a) {$3$}
;}\newcommand{\nodeb}{\node[draw,circle] (b) {$1$}
;}\begin{tikzpicture}[baseline=0, inner sep=0.3mm]
\matrix[column sep=.3cm, row sep=.3cm,ampersand replacement=\&]{
 \nodea  \\ 
 \nodeb  \\
};
\path[ultra thick] (a) edge (b);
\end{tikzpicture}}\,{ \newcommand{\nodea}{\node[draw,circle] (a) {$4$}
;}\begin{tikzpicture}[baseline=0, inner sep=0.3mm]
\matrix[column sep=.3cm, row sep=.3cm,ampersand replacement=\&]{
 \nodea  \\
};
\end{tikzpicture}}\,{ \newcommand{\nodea}{\node[draw,circle] (a) {\phantom{$6$}}
;}\newcommand{\nodeb}{\node[draw,circle] (b) {$5$}
;}\newcommand{\nodec}{\node[draw,circle] (c) {$2$}
;}\begin{tikzpicture}[baseline=0, inner sep=0.3mm]
\matrix[column sep=.3cm, row sep=.3cm,ampersand replacement=\&]{
 \nodea  \\ 
 \nodeb  \\ 
 \nodec  \\
};
\path[ultra thick] (b) edge (c)
	(a) edge (b);
\end{tikzpicture}}\right]
%%%%%%%%%%%%%%%%%%%%%%%%%%%%%%%%%%%%
\left[{ \newcommand{\nodea}{\node[draw,circle] (a) {$3$}
;}\newcommand{\nodeb}{\node[draw,circle] (b) {$1$}
;}\begin{tikzpicture}[baseline=0, inner sep=0.3mm]
\matrix[column sep=.3cm, row sep=.3cm,ampersand replacement=\&]{
 \nodea  \\ 
 \nodeb  \\
};
\path[ultra thick] (a) edge (b);
\end{tikzpicture}}\,{ \newcommand{\nodea}{\node[draw,circle] (a) {$5$}
;}\begin{tikzpicture}[baseline=0, inner sep=0.3mm]
\matrix[column sep=.3cm, row sep=.3cm,ampersand replacement=\&]{
 \nodea  \\
};
\end{tikzpicture}}\,{ \newcommand{\nodea}{\node[draw,circle] (a) {\phantom{$6$}}
;}\newcommand{\nodeb}{\node[draw,circle] (b) {$4$}
;}\newcommand{\nodec}{\node[draw,circle] (c) {$2$}
;}\begin{tikzpicture}[baseline=0, inner sep=0.3mm]
\matrix[column sep=.3cm, row sep=.3cm,ampersand replacement=\&]{
 \nodea  \\ 
 \nodeb  \\ 
 \nodec  \\
};
\path[ultra thick] (b) edge (c)
	(a) edge (b);
\end{tikzpicture}}\right]
%%%%%%%%%%%%%%%%%%%%%%%%%%%%%%%%%%%%
\left[{ \newcommand{\nodea}{\node[draw,circle] (a) {$3$}
;}\newcommand{\nodeb}{\node[draw,circle] (b) {$2$}
;}\begin{tikzpicture}[baseline=0, inner sep=0.3mm]
\matrix[column sep=.3cm, row sep=.3cm,ampersand replacement=\&]{
 \nodea  \\ 
 \nodeb  \\
};
\path[ultra thick] (a) edge (b);
\end{tikzpicture}}\,{ \newcommand{\nodea}{\node[draw,circle] (a) {$4$}
;}\begin{tikzpicture}[baseline=0, inner sep=0.3mm]
\matrix[column sep=.3cm, row sep=.3cm,ampersand replacement=\&]{
 \nodea  \\
};
\end{tikzpicture}}\,{ \newcommand{\nodea}{\node[draw,circle] (a) {\phantom{$6$}}
;}\newcommand{\nodeb}{\node[draw,circle] (b) {$5$}
;}\newcommand{\nodec}{\node[draw,circle] (c) {$1$}
;}\begin{tikzpicture}[baseline=0, inner sep=0.3mm]
\matrix[column sep=.3cm, row sep=.3cm,ampersand replacement=\&]{
 \nodea  \\ 
 \nodeb  \\ 
 \nodec  \\
};
\path[ultra thick] (b) edge (c)
	(a) edge (b);
\end{tikzpicture}}\right]
%%%%%%%%%%%%%%%%%%%%%%%%%%%%%%%%%%%%
\left[{ \newcommand{\nodea}{\node[draw,circle] (a) {$3$}
;}\newcommand{\nodeb}{\node[draw,circle] (b) {$2$}
;}\begin{tikzpicture}[baseline=0, inner sep=0.3mm]
\matrix[column sep=.3cm, row sep=.3cm,ampersand replacement=\&]{
 \nodea  \\ 
 \nodeb  \\
};
\path[ultra thick] (a) edge (b);
\end{tikzpicture}}\,{ \newcommand{\nodea}{\node[draw,circle] (a) {$5$}
;}\begin{tikzpicture}[baseline=0, inner sep=0.3mm]
\matrix[column sep=.3cm, row sep=.3cm,ampersand replacement=\&]{
 \nodea  \\
};
\end{tikzpicture}}\,{ \newcommand{\nodea}{\node[draw,circle] (a) {\phantom{$6$}}
;}\newcommand{\nodeb}{\node[draw,circle] (b) {$4$}
;}\newcommand{\nodec}{\node[draw,circle] (c) {$1$}
;}\begin{tikzpicture}[baseline=0, inner sep=0.3mm]
\matrix[column sep=.3cm, row sep=.3cm,ampersand replacement=\&]{
 \nodea  \\ 
 \nodeb  \\ 
 \nodec  \\
};
\path[ultra thick] (b) edge (c)
	(a) edge (b);
\end{tikzpicture}}\right]
\\[3mm] %%%%%%%%%%%%%%%%%%%%%%%%%%%%%%%%%%%%
\left[{ \newcommand{\nodea}{\node[draw,circle] (a) {$3$}
;}\newcommand{\nodeb}{\node[draw,circle] (b) {$1$}
;}\begin{tikzpicture}[baseline=0, inner sep=0.3mm]
\matrix[column sep=.3cm, row sep=.3cm,ampersand replacement=\&]{
 \nodea  \\ 
 \nodeb  \\
};
\path[ultra thick] (a) edge (b);
\end{tikzpicture}}\,{ \newcommand{\nodea}{\node[draw,circle] (a) {$4$}
;}\begin{tikzpicture}[baseline=0, inner sep=0.3mm]
\matrix[column sep=.3cm, row sep=.3cm,ampersand replacement=\&]{
 \nodea  \\
};
\end{tikzpicture}}\,{ \newcommand{\nodea}{\node[draw,circle] (a) {$5$}
;}\begin{tikzpicture}[baseline=0, inner sep=0.3mm]
\matrix[column sep=.3cm, row sep=.3cm,ampersand replacement=\&]{
 \nodea  \\
};
\end{tikzpicture}}\,{ \newcommand{\nodea}{\node[draw,circle] (a) {\phantom{$6$}}
;}\newcommand{\nodeb}{\node[draw,circle] (b) {$2$}
;}\begin{tikzpicture}[baseline=0, inner sep=0.3mm]
\matrix[column sep=.3cm, row sep=.3cm,ampersand replacement=\&]{
 \nodea  \\ 
 \nodeb  \\
};
\path[ultra thick] (a) edge (b);
\end{tikzpicture}}\right]
%%%%%%%%%%%%%%%%%%%%%%%%%%%%%%%%%%%%
\left[{ \newcommand{\nodea}{\node[draw,circle] (a) {$3$}
;}\newcommand{\nodeb}{\node[draw,circle] (b) {$2$}
;}\begin{tikzpicture}[baseline=0, inner sep=0.3mm]
\matrix[column sep=.3cm, row sep=.3cm,ampersand replacement=\&]{
 \nodea  \\ 
 \nodeb  \\
};
\path[ultra thick] (a) edge (b);
\end{tikzpicture}}\,{ \newcommand{\nodea}{\node[draw,circle] (a) {$4$}
;}\begin{tikzpicture}[baseline=0, inner sep=0.3mm]
\matrix[column sep=.3cm, row sep=.3cm,ampersand replacement=\&]{
 \nodea  \\
};
\end{tikzpicture}}\,{ \newcommand{\nodea}{\node[draw,circle] (a) {$5$}
;}\begin{tikzpicture}[baseline=0, inner sep=0.3mm]
\matrix[column sep=.3cm, row sep=.3cm,ampersand replacement=\&]{
 \nodea  \\
};
\end{tikzpicture}}\,{ \newcommand{\nodea}{\node[draw,circle] (a) {\phantom{$6$}}
;}\newcommand{\nodeb}{\node[draw,circle] (b) {$1$}
;}\begin{tikzpicture}[baseline=0, inner sep=0.3mm]
\matrix[column sep=.3cm, row sep=.3cm,ampersand replacement=\&]{
 \nodea  \\ 
 \nodeb  \\
};
\path[ultra thick] (a) edge (b);
\end{tikzpicture}}\right]
\end{gather*}
As a result we get that
\[C_{(0, 0, 2, 1, 1)} = 1 + 4\cdot2 + 5\cdot2^2 + 2\cdot2^3 = 45\,.\]
This agrees with
\begin{multline*}
  C_{(0, 0, 2, 1, 1)} =
  \left[2\binom{0}{-1}+\binom{0}{0}\right]
  \left[2\binom{1}{-1}+\binom{1}{0}\right]
  \ \times\\  \times
  \left[2\binom{2}{1}+\binom{2}{2}\right]
  \left[2\binom{1}{0}+\binom{1}{1}\right]
  \left[2\binom{1}{0}+\binom{1}{1}\right]\,.
\end{multline*}
\end{example}

\begin{definition}
  For any $F \in \Forests_{k^\circ}$, $k > 0$ we denote by $F' \in
  \Forests_{k}$ the forest obtained by pruning the childrens of the node
  labeled $\circ$ and grafting them, to the node labeled $k$. Of course, the
  old and new children of $k$ are shuffled to draw them in increasing
  order. Finally by replacing $k$ by $\circ$ in $F'$, we consider that $F'$
  actually belongs to $\Forests_{k-1^\circ}$.
\end{definition}
\begin{example}\label{forest-prime}
  It will becomes apparent in the following proofs that there are two
  different cases, whether $k$ is a child of $\circ$ or not.

  \begin{itemize}
  \item We start by a case where $k=9$ is a not child of $\circ$ and thus a
    root of $F$.  We show below a forest $F$ together with its associated
    $F'$.
    \[
    F = \left(
      \begin{tikzpicture}[baseline=-20, scale=0.4, inner sep=0.3mm]
        \node[circle, draw] {$4$};
      \end{tikzpicture}\
      \begin{tikzpicture}[baseline=-20, scale=0.4, inner sep=0.3mm]
        \node[circle, draw] {$5$}
        child {node[circle, draw] {$1$}};
      \end{tikzpicture}\
      \begin{tikzpicture}[baseline=-20, scale=0.4, inner sep=0.3mm]
        \node[circle, draw] {$9$}
        child {node[circle, draw] {$6$}}
        child {node[circle, draw] {$8$}};
      \end{tikzpicture}\
      \begin{tikzpicture}[baseline=-20, scale=0.4, inner sep=0.3mm]
        \node[circle, draw] {\phantom{$\circ$}}
        child {node[circle, draw] {$3$}
          child {node[circle, draw] {$2$}}
        }
        child {node[circle, draw] {$7$}};
      \end{tikzpicture}\right)
    \qquad
    F' = \left(
      \begin{tikzpicture}[baseline=-20, scale=0.4, inner sep=0.3mm]
        \node[circle, draw] {$4$};
      \end{tikzpicture}\
      \begin{tikzpicture}[baseline=-20, scale=0.4, inner sep=0.3mm]
        \node[circle, draw] {$5$}
        child {node[circle, draw] {$1$}};
      \end{tikzpicture}\
      \begin{tikzpicture}[baseline=-20, scale=0.4, inner sep=0.3mm]
        \node[circle, draw] {\phantom{$9$}}
        child {node[circle, draw] {$3$}
          child {node[circle, draw] {$2$}}
        }
        child {node[circle, draw] {$6$}}
        child {node[circle, draw] {$7$}}
        child {node[circle, draw] {$8$}};
      \end{tikzpicture}\right)
    \]
    One can check that their associated monomials are
    $X_F=X^{(0,0,1,1,2,0,0,0,3)}$ and $X_{F'}=X^{(0,0,1,1,2,0,0,0)}$.
  \item We show now a forest $F$ where $k=9$ is a child of $\circ$ together
    with its associated $F'$.
    \[
    F = \left(
      \begin{tikzpicture}[baseline=-20, scale=0.4, inner sep=0.3mm]
        \node[circle, draw] {$4$};
      \end{tikzpicture}\
      \begin{tikzpicture}[baseline=-20, scale=0.4, inner sep=0.3mm]
        \node[circle, draw] {$8$}
        child {node[circle, draw] {$6$}}
        child {node[circle, draw] {$7$}};
      \end{tikzpicture}\
      \begin{tikzpicture}[baseline=-20, scale=0.4, inner sep=0.3mm]
        \node[circle, draw] {\phantom{$\circ$}}
        child [sibling distance=2.5cm] {node[circle, draw] {$3$}
          child {node[circle, draw] {$2$}}
        }
        child {node[circle, draw] {$9$}
          child {node[circle, draw] {$1$}}
          child {node[circle, draw] {$5$}}
        };
      \end{tikzpicture}\right)
    \qquad
    F' = \left(
      \begin{tikzpicture}[baseline=-20, scale=0.4, inner sep=0.3mm]
        \node[circle, draw] {$4$};
      \end{tikzpicture}\
      \begin{tikzpicture}[baseline=-20, scale=0.4, inner sep=0.3mm]
        \node[circle, draw] {$8$}
        child {node[circle, draw] {$6$}}
        child {node[circle, draw] {$7$}};
      \end{tikzpicture}\
      \begin{tikzpicture}[baseline=-20, scale=0.4, inner sep=0.3mm]
        \node[circle, draw] {\phantom{$9$}}
        child {node[circle, draw] {$1$}}
        child {node[circle, draw] {$3$}
          child {node[circle, draw] {$2$}}
        }
        child {node[circle, draw] {$5$}};
      \end{tikzpicture}\right)
    \]
    One can check that their associated monomials are
    $X_F=X^{(0,0,1,1,0,0,0,3,2)}$ and $X_{F'}=X^{(0,0,1,1,0,0,0,3)}$.
  \end{itemize}
\end{example}
\begin{lemma} \label{IdMon}
  For any $F \in \Forests_{k^\circ}$, with $k \geqslant 0$
  the identity $X_{F'} = X_{F \mid X_k = 1}$ holds.
\end{lemma}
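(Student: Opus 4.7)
The plan is to reduce the identity to a node-by-node comparison of the exponents of the variables $X_j$. First I would combine the definitions of $X_F$, $X_T$ and $X_T^{\ast}$ into a single product over the non-$\circ$ nodes of $F$:
\[
X_F \;=\; \prod_{\nu \in F,\ \nu \neq \circ} X_\nu^{e_F(\nu)},
\qquad
e_F(\nu) \;=\; \ell_F(\nu) \;+\;
\begin{cases} 1 & \text{if } \nu \text{ is a root of } F, \\ 0 & \text{otherwise.}\end{cases}
\]
Indeed, a non-$\circ$ node $\nu$ either lies in $F_\circ^{\ast}$ (contributing $X_\nu^{\ell_F(\nu)}$ through $X^{\ast}_{F_\circ}$, and is then not a root of $F$), or it lies in a unique $T \in F^\dagger$, contributing $X_\nu^{\ell_F(\nu) + \delta_{\rho(T),\nu}}$, the extra $+1$ occurring precisely when $\nu$ is the root of $T$, i.e.\ a non-$\circ$ root of $F$.

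Next I would analyse how the operation $F \mapsto F'$ acts on both $\ell$ and root-status. By the very definition of $F'$, only two nodes have their children reassigned: the old $\circ$ loses all its children, and $k$ gains all of them (minus $k$ itself in the case $k$ was a child of $\circ$). Therefore, for every $j \in [k-1]$, we have $\ell_{F'}(j) = \ell_F(j)$. Likewise, the root-status of each $j \in [k-1]$ is unchanged, because the only parent pointers that are rewired point to $\circ$ or to $k$, leaving the incoming edges at all other nodes untouched. After the relabeling of $k$ as $\circ$, this yields $e_{F'}(j) = e_F(j)$ for every $j \in [k-1]$.

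Finally, the variable $X_k$ does not appear in $X_{F'}$ at all, because $k$ has become $\circ$ and $\circ$ never contributes any variable to $X_{\bullet}$. Hence
\[
X_{F'} \;=\; \prod_{j=1}^{k-1} X_j^{e_{F'}(j)} \;=\; \prod_{j=1}^{k-1} X_j^{e_F(j)} \;=\; X_F \bigm|_{X_k = 1},
\]
which is the claimed identity.

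The only delicate point, which I would handle by a brief case distinction but do not expect to be the main obstacle, is the exact value of $\ell_{F'}(\circ)$ (the new $\circ$, i.e.\ the old $k$): it equals $\ell_F(k) + \ell_F(\circ) - 1$ if $k$ was a child of $\circ$ in $F$, and $\ell_F(k) + \ell_F(\circ)$ if $k$ was a root of $F$. Since the new $\circ$ contributes no variable to $X_{F'}$, this asymmetry is invisible to the identity we want to prove, and the whole argument reduces to the bookkeeping observation above.
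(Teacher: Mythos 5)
Your argument is correct and follows the same route as the paper's (very terse) proof: both rest on the observation that passing from $F$ to $F'$ rewires only the parent pointers of the children of $\circ$, so every node $j<k$ keeps the same number of children and the same root-status, while $k$ itself drops out of the monomial upon becoming $\circ$. Your write-up merely makes explicit the uniform exponent formula $e_F(\nu)=\ell_F(\nu)+\delta_{\nu,\operatorname{Root}(F)}$ that the paper leaves implicit.
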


\begin{proof}
  By definition of $F'$ we infer the identity $\nchild[F']{j} = \nchild[F]{j}$ for any node
  labeled $j < k$. Moreover the node labeled $j < k$ is a root in $F'$ if and
  only if it is a root in $F$.
\end{proof}
For $P=(p_1,\dots,p_k)$, we denote by $\deg \left( X^P \right) \assign
p_1+\dots+p_k$.

\begin{lemma}
  \label{degId}For any $F \in \Forests_{k^\circ}$, with $k \geqslant 0$ the
  identity $\nchild[F]{\circ} = k - \deg \left( X_F \right)$ holds. In
  particular $\nchild[F]{\circ}$ depends only on $X_F$ and not on $F$.
\end{lemma}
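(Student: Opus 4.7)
The plan is to compute $\deg(X_F)$ directly from the defining formula $X_F = X^*_{F_\circ}\prod_{T\in F^\dagger}X_T$ and then read off $\nchild[F]{\circ}$ from the result. The key observation is simply counting: each node in $[k]$ contributes its number of children to the degree, and in addition each root of $F^\dagger$ contributes an extra $+1$ coming from the Kronecker $\delta_{\Root{T},\nu}$ in the definition of $X_T$; the node $\circ$ contributes nothing since we take the root-truncated monomial $X^*_{F_\circ}$ for the tree at $\circ$.

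More precisely, I would first note that
\[
\deg(X_T)\ =\ \sum_{\nu\in T}\nchild[F]{\nu}+1
\qquad\text{and}\qquad
\deg(X^*_{F_\circ})\ =\ \sum_{\nu\in F_\circ^*}\nchild[F]{\nu}\,,
\]
since exactly one node of $T$ (its root) carries the $+1$ from $\delta_{\Root{T},\nu}$, and since the $\circ$ node is excluded from $F_\circ^*$. Summing over all trees and using that $F_\circ^*\sqcup\bigsqcup_{T\in F^\dagger}T=[k]$ (the node sets of the trees of $F$ partition $[k]\cup\{\circ\}$, and we remove only the node $\circ$), I get
\[
\deg(X_F)\ =\ \sum_{\nu\in[k]}\nchild[F]{\nu}\ +\ (l_F-1)\,,
\]
where $l_F-1$ is the number of trees in $F^\dagger$, each contributing one $+1$.

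The next step uses the global counting identity in any forest: the total number of children equals the total number of non-root nodes. Since $F$ has $k+1$ nodes and $l_F$ roots,
\[
\sum_{\nu\in F}\nchild[F]{\nu}\ =\ (k+1)-l_F\,,
\]
and therefore
\[
\sum_{\nu\in[k]}\nchild[F]{\nu}\ =\ (k+1)-l_F-\nchild[F]{\circ}\,.
\]
Plugging this back yields
\[
\deg(X_F)\ =\ (k+1)-l_F-\nchild[F]{\circ}+(l_F-1)\ =\ k-\nchild[F]{\circ}\,,
\]
which rearranges to the claim. Since $\deg(X_F)$ depends only on the monomial $X_F$, the last sentence of the lemma follows for free.

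There is no real obstacle here; the only thing to be careful about is the bookkeeping of the $+1$ for each root of $F^\dagger$ versus the absence of such a $+1$ for the root $\circ$ of $F_\circ$ (coming from using $X^*_{F_\circ}$ rather than $X_{F_\circ}$). Once these two conventions are properly accounted for, the identity $\nchild[F]{\circ}=k-\deg(X_F)$ is an immediate consequence of the handshake lemma for forests.
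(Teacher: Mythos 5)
Your proof is correct, and it takes a genuinely different route from the paper. The paper proceeds by induction on $k$: it uses the pruning operation $F\mapsto F'$ (grafting the children of $\circ$ onto the node $k$), invokes Lemma~\ref{IdMon} to identify $X_{F'}$ with $X_{F\mid X_k=1}$, and splits into two cases according to whether $k$ is a child of $\circ$ or a root of $F$, tracking how $\nchild[F']{k}$ relates to $\nchild[F]{k}$ and $\nchild[F]{\circ}$ in each case. You instead compute $\deg(X_F)$ globally in one pass: the node sets of the trees of $F$ partition $[k]\cup\{\circ\}$, each root of $F^\dagger$ contributes an extra $+1$ while $\circ$ does not (because of the root truncation in $X^*_{F_\circ}$), and the handshake identity $\sum_{\nu\in F}\nchild[F]{\nu}=(k+1)-l_F$ makes the $l_F$ terms cancel. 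Your argument is shorter, needs neither the $F'$ construction nor Lemma~\ref{IdMon}, and makes transparent why the answer is independent of $F$. What the paper's inductive route buys is coherence with the rest of the section: the same $F\mapsto F'$ decomposition and the same root/non-root dichotomy are the engine of the proof of Theorem~\ref{TheoLieTree}, so the paper's proof of this lemma doubles as a rehearsal of that machinery, whereas your proof stands alone.
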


\begin{proof}
  We prove this by induction on $k$. If $k = 0$, there is only one forest $F
  \in \Forests_{\{\circ\}}$ whose monomial is $X_F = 1$. We consider now a
  forest $F \in \Forests_{k^\circ}$ with $X_F = X^{p_1}_1 \cdots
  X^{p_k}_k$. We reccal that the power $p_j$ of a monomial $X^{p_j}_j$
  satisfies: $p_j = \nchild[F]j + 1$ if $j = \Root{T}$ for some $T \in F$ and
  $p_j = \nchild[F]j$ if $j \neq \Root{T}$ for all $T \in F$. By the inductive
  assumption the statement hold for $F'$.  There are two case, whether $k$ is
  a child of $\circ$ or a root in $F$.
  \begin{itemize}
  \item If $k$ is not a child of $\circ$, i.e. a root, then
    $\nchild[F']k = \nchild[F]k + \nchild[F]{\circ}$. Therefore
    \begin{align*}
      \nchild[F]{\circ} & = \nchild[F']k - \nchild[F]k\\
      & = k - 1 - \deg \left( X_{F'} \right) - \left( p_k - 1 \right)\\
      & = k - \deg \left( X_F \right)\,.
    \end{align*}
    The last equality follows from Lemma~\ref{IdMon}.
  \item If $k$ is a child of $\circ$, by definition of $F'$, the childrens of
    $k$ in $F'$ are the union of those of $k$ in $F$ and those of $\circ$
    exept $k$. Thus $\nchild[F']{k} = \nchild[F]{k} + \nchild[F]{\circ} -
    1$. We infer
    \begin{align*}
      \nchild[F]{\circ} & = 1 + \nchild[F']k - \nchild[F]k\\
      & = 1 + \left( k - 1 - \deg \left( X_{F'} \right) \right) - p_k\\
      & = k - \deg \left( X_F \right)\,.
    \end{align*}
    The last equality follows from Lemma~\ref{IdMon}. \qedhere
  \end{itemize}
\end{proof}
\bigskip

\begin{proof}[Proof of theorem \ref{TheoLieTree}.]
  For any $P\in\Dyck(k)$, define
  \begin{equation*}
    C'_P\ \assign\ \sum_{F \in \Forests_P} 2^{l_F - 1}\,.
  \end{equation*}
  Then
  \begin{equation*}
    \Sigma_k
    \ =\ \sum_{F \in \Forests_{k^\circ}} 2^{l_F - 1} B^{\nchild[F]{\circ}} X_F
    \ =\ \sum_{P \in \Dyck(k)} C'_P B^{D_{P, k}} X^P,
  \end{equation*}
  thanks to lemma \ref{degId}. Thus our goal is to show the equality
  \begin{equation}
    C'_P \ =\  \prod_{j = 1}^k
    \left[ 2 \binom{D_{P, j - 1}}{p_j - 1} + \binom{D_{P, j - 1}}{p_j} \right]
    \ =\ C_P \,.
  \end{equation}
  We proceed by induction on $k$. We decompose $\Forests_P$ as a disjoint
  union as
  \begin{align*}
    \Forests_P&\ =\
    \bigsqcup_{H \in \Forests_{p_1, \ldots p_{k - 1}}} \Forests_P(H)\,,\\
    \intertext{where}
    \Forests_P(H)&\ \assign\ \left\{
      F \in \Forests_{P} \mid F' = H
    \right\}\,.
  \end{align*}
  As a consequence,
  \begin{equation}\label{expansion-Cpprime}
    C'_P = \sum_{H \in \Forests_{p_1, \ldots p_{k - 1}}}\
      \sum_{F\in\Forests_P(H)} 2^{l_F-1}\,.
  \end{equation}
  Thanks to Lemma~\ref{degId}, we infer $\nchild[H]k = k-1 - \deg(X_H)$. By
  Lemma~\ref{IdMon}, we have $\deg(X_H)=p_1+\dots+p_{k-1}$ so that
  \begin{equation}\label{rel_H_Dpk1}
    \nchild[H]k = D_{P,k-1}\,.
  \end{equation}
  We now distinguish two cases, whether $k$ is a root of $F\in\Forests_P(H)$
  or not:
  \begin{itemize}
  \item The number of $F\in\Forests_P(H)$ such that $k$ is a root in $F$ is
    given by
    \begin{equation*}
      \binom{D_{P, k - 1}}{p_k - 1}\,.
    \end{equation*}
  Indeed for fixed $H$, the equality
  \begin{equation}
    \label{trunkForrestEq} F \cap [k - 1] = H \cap [k - 1]\,,
  \end{equation}
  shows that the only freedom of the forests $F\in\Forests_P(H)$ which satisfy
  (\ref{trunkForrestEq}) is in the choice of the the $\nchild[F]k=p_k - 1$
  childrens of $k$ in $F$ among the $\nchild[H]k=D_{P, k - 1}$ childrens of $k$ in
  $H$. These childrens are given by the union of the childrens of $\circ$ and
  $k$ in $F$.  The case $p_k=0$ does not occur since $k$ is a root in
  $F$. This is consistent with the convention $\binom{a}{-1}=0$.

  Using the fact that, in this case, $l_F= l_{H}+1$, one conclude that for any
  fixed $H \in \Forests_{p_1, \ldots p_{k-1}}$, one has
  \begin{equation}\label{case-k-root}
    \sum_{\substack{F\in\Forests_P(H)\\
      \text{$k$ is a root of $F$}}} 2^{l_F-1}
    \ =\
    2\cdot2^{l_{H}-1}\binom{D_{P, k - 1}}{p_k - 1}\,.
  \end{equation}

\item The number of $F\in\Forests_P(H)$ such that $k$ is not a root in $F$
  is given by
  \[
  \binom{D_{P, k - 1}}{p_k}\,.
  \]
  The reason is the same as before. We notice that the definition of Dyck
  vector allows the case were $p_k = D_{P, k - 1}+1$ (this is the case in
  Example~\ref{ex-comput-Cp} for $k=5$). But $k$ has $D_{P, k - 1}$ children
  in $F'$ thanks to~(\ref{rel_H_Dpk1}). Therefore it cannot have $p_k$
  children in $F$. This is consistent with the convention $\binom{a}{a+1}=0$.

  Using the fact that, in this case, $l_F= l_{H}$, one conclude that for any
  fixed $H \in \Forests_{p_1, \ldots p_{k-1}}$, one has
  \begin{equation}\label{case-k-not-root}
    \sum_{\substack{F\in\Forests_P(H)\\
      \text{$k$ is not a root of $F$}}} 2^{l_F-1}
    \ =\
    2^{l_{H}-1}\binom{D_{P, k - 1}}{p_k}\,.
  \end{equation}
  \end{itemize}
  Combining Equation~(\ref{expansion-Cpprime}) with the two
  identities~(\ref{case-k-root}) and (\ref{case-k-not-root}) we obtain
  \begin{align*}
    \label{inductiveFormul}
    C'_P\ &=\
    \sum_{H \in \Forests_{p_1, \ldots p_{k - 1}}}
    \left[
      2 \binom{D_{P, k - 1}}{p_k - 1} + \binom{D_{P, k - 1}}{p_k}
    \right] 2^{l_{H} - 1} \\
    &\ =
    C'_{p_1, \ldots p_{k - 1}}
    \left[
      2 \binom{D_{P, k - 1}}{p_k - 1} + \binom{D_{P, k - 1}}{p_k}
    \right]\,.
  \end{align*}
  We finally infer the required identity $C'_P = C_P$ by induction on $k$.
\end{proof}
\begin{remark}
We notice the formula
\begin{equation*}
  C_P \ =\
  \prod_{\substack{
      1 \leqslant j \leqslant k\\
      p_j  \neq 0
    }}
  \left( 2 + \frac{D_{P, j}}{p_j} \right)  \binom{D_{P, j - 1}}{p_j - 1}\,.
\end{equation*}
Indeed
\begin{equation*}
  C_P \ = \
  \prod_{\substack{
      1 \leqslant j \leqslant k\\
      p_j  \neq 0
    }}
  \left[ 2 \binom{D_{P, j - 1}}{p_j - 1} + \binom{D_{P, j - 1}}{p_j} \right],
\end{equation*}
and
\begin{equation*}
  \binom{D_{P, j - 1}}{p_j} \ = \
  \frac{D_{P, j - 1} - p_j + 1}{p_j}
  \binom{D_{P, j - 1}}{p_j - 1}\,.
\end{equation*}
Then the conclusion follows from the identity $D_{P, j} = D_{P, j - 1} - p_j +
1$.
\end{remark}

\section{Higher order covariant derivatives of tensors}

In sequel, we denote for any $S\subset\N_{>0}$ we denote
\begin{equation*}
  \nabla^{S}_{\xi_{\bullet}}
  \ \assign\
  \left(
    \bigotimes_{p \in S} \xi_p
  \right) \neg \nabla^{|S|}\,.
\end{equation*}
The reader should not confuse this with $\nabla^{T}_{\xi_{\bullet}}$ which is
used when $T$ is a tree.

We set $\Map{h}{l} \assign \{ \mu : [h] \longrightarrow [l]\}$.
Let $A_j$ be smooth sections of the bundle $\left( T^{\ast}_X \right)^{\otimes
q_j} \otimes T_X$, $j = 1, \ldots, l$. There are many situations in which the
notion of product $\prod_{j = 1}^l A_j$ is well defined. This is the case for
instance when:
\begin{enumerate}
\item\label{case-prod-comp} $q_j = 1$ for all $j = 1, \ldots, l - 1$. In
  this case the product is just a composition $A_1\circ A_2\circ\dots\circ
  A_l(\xi_1\otimes\dots\otimes \xi_{q_l})$ of endomorphisms with a
  $q_l$-linear map giving a $q_l$-linear map.
\item\label{case-prod-tree} $q_1 = l - 1$ and $q_j = 0, 1$, for $j
  \geqslant 2$. In this second case, the product is the application of a
  $(l-1)$-linear map $A_1(A_2[\xi_2]\otimes A_3[\xi_3] \otimes \dots \otimes
  A_l[\xi_l])$ to either vector fields $A_j$ when $q_j=0$ or the value
  $A_j(\xi_l)$ of the linear map $A_j$ when $q_j=1$. The bracket around the
  $[\xi_j]$ means that they are only present if $q_j=1$. The result is a
  $q_2+q_3+\dots+q_l$-linear map.
\end{enumerate}
In all these cases the following lemma hold.
\begin{lemma}
  \label{abstrLeibnitz}Let $A_j$ be smooth sections of $\left( T^{\ast}_X
  \right)^{\otimes q_j} \otimes T_X$, $j = 1, \ldots, l$ such that the formal
  product $\prod_{j = 1}^l A_j$ is well defined and let $\left( \xi_p
  \right)^h_{p = 1}$ be a family of vector fields over $X$. Then the $h$-order
  covariant derivative satisfies the general Leibnitz identity
  \begin{equation*}
    \left( \bigotimes_{p = 1}^h \xi_p \right)
    \neg \nabla^h \left( \prod_{j=1}^l A_j \right)
    \ = \ 
    \sum_{\mu \in \Map{h}{l}}
    \left(\prod_{j = 1}^l
      \nabla^{\mu^{-1}(j)}_{\xi_{\bullet}} A_j\right).
  \end{equation*}
\end{lemma}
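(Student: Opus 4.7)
The proof reduces to a pointwise, purely algebraic calculation. Both sides of the identity are $C^\infty(X)$-multilinear in the vector fields $(\xi_p)_{p=1}^h$, because $\nabla^h(\prod_{j=1}^l A_j)$ is a genuine tensor whose contraction with $\xi_1 \otimes \cdots \otimes \xi_h$ is $C^\infty$-linear in each slot, and each factor $\nabla^{\mu^{-1}(j)}_{\xi_\bullet} A_j$ on the right is likewise a multilinear contraction. It therefore suffices to verify the identity pointwise at each $x \in X$, and at such a point one is free to replace the germs of the $\xi_p$'s by any convenient extensions of the vectors $\xi_p(x)$.

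I would choose extensions $\tilde\xi_p$ that are $\nabla$-parallel at $x$, i.e.\ with $(\nabla \tilde\xi_p)(x) = 0$ for every $p$. Such extensions exist for any torsion-free connection~---~for instance, by working in normal coordinates based at $x$ (in which the Christoffel symbols vanish at $x$) and extending each $\xi_p(x)$ to a vector field with constant components. Under this normalization, every quantity of the form $\nabla_{\tilde\xi_q} \tilde\xi_p$ vanishes at $x$, and a short recursion on $|S|$ shows that for any tensor $C$ and any subset $S = \{p_1 < \cdots < p_{|S|}\} \subseteq [h]$,
\[
\left(\bigotimes_{p \in S} \tilde\xi_p\right) \neg \nabla^{|S|} C
\ =\
\nabla_{\tilde\xi_{p_1}} \cdots \nabla_{\tilde\xi_{p_{|S|}}} C
\quad \text{at } x,
\]
since every Christoffel-type correction term in the expansion of the tensorial contraction carries at least one factor $\nabla_{\tilde\xi_q}\tilde\xi_p$, which vanishes at $x$.

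The identity then reduces to the iterated Leibniz rule
\[
\nabla_{\tilde\xi_1} \cdots \nabla_{\tilde\xi_h}\Bigl(\prod_{j=1}^l A_j\Bigr)
\ =\
\sum_{\mu \in \Map{h}{l}}\ \prod_{j=1}^l \nabla_{\tilde\xi_{p_1^j}} \cdots \nabla_{\tilde\xi_{p_{k_j}^j}} A_j,
\]
where $\mu^{-1}(j) = \{p_1^j < \cdots < p_{k_j}^j\}$. This follows by an easy induction on $h$: the base $h = 0$ is trivial, since the unique empty map gives $\prod_j A_j$ on both sides, and the step $h \to h+1$ follows from the derivation property $\nabla_\eta(A_1 \cdots A_l) = \sum_{s=1}^l A_1 \cdots (\nabla_\eta A_s) \cdots A_l$ of $\nabla$, because extending a map $\mu \in \Map{h}{l}$ to a map in $\Map{h+1}{l}$ by prescribing $\mu(h+1) = s \in [l]$ bijectively accounts for all new terms produced by the extra derivative $\nabla_{\tilde\xi_{h+1}}$.

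The main obstacle is verifying cleanly the pointwise reduction from the tensorial contraction to the iterated directional derivative at $x$. Once this is done, the combinatorics of the iterated Leibniz rule is completely straightforward, and the whole argument is assembled from these two ingredients.
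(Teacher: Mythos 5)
Your overall strategy (tensoriality in the $\xi_p$, normalization of the extensions at a point, then the iterated Leibniz rule) is the same as the paper's, but the key reduction step contains a genuine gap. The intermediate claim
\begin{equation*}
  \Bigl(\bigotimes_{p\in S}\tilde\xi_p\Bigr)\neg\nabla^{|S|}C
  \ =\ \nabla_{\tilde\xi_{p_1}}\cdots\nabla_{\tilde\xi_{p_{|S|}}}C
  \qquad\text{at }x
\end{equation*}
is \emph{false} for $|S|\geqslant 3$ under the normalization $(\nabla\tilde\xi_p)(x)=0$. Already for a function $f$ one computes, at $x$,
\begin{equation*}
  \nabla^3 f(\tilde\xi_1,\tilde\xi_2,\tilde\xi_3)
  \ =\ \tilde\xi_1\tilde\xi_2\tilde\xi_3 f
  \ -\ \bigl(\nabla_{\tilde\xi_1}\nabla_{\tilde\xi_2}\tilde\xi_3\bigr)f\,,
\end{equation*}
and the second covariant derivative $\nabla_{\tilde\xi_1}\nabla_{\tilde\xi_2}\tilde\xi_3(x)$ cannot be normalized away: for constant-component extensions in normal coordinates it equals $\tilde\xi_1^j\tilde\xi_2^i\tilde\xi_3^m\,\partial_j\Gamma_{im}^k(x)\,\partial_k$, whose antisymmetric part in $i,j$ is curvature. (Equivalently: $\nabla^3f$ is not symmetric in its first two arguments by the Ricci identity, whereas $\tilde\xi_1\tilde\xi_2\tilde\xi_3 f$ is symmetric at $x$ because $[\tilde\xi_1,\tilde\xi_2](x)=0$.) The flaw in your ``short recursion'' is that the correction terms do each carry a factor $\nabla_{\tilde\xi_q}\tilde\xi_p$, but these factors sit \emph{inside further derivatives}, and a factor vanishing at $x$ does not make its derivative vanish at $x$. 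Since the lemma is true, the error terms you silently introduce on the two sides must in the end cancel against each other, but your argument asserts that each of them is zero, which is wrong; so nothing is established for $h\geqslant 3$.

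The paper sidesteps exactly this issue: it runs the induction on the statement of the lemma itself, which is an identity of sections valid everywhere and for arbitrary vector fields, and therefore may legitimately be differentiated \emph{once} in a fresh direction $\xi_0$. After that single differentiation, the correction terms are honest zero-order contractions against $\nabla_{\xi_0}\xi_p$ (not derivatives of such contractions), so the first-order normalization $\nabla_{\xi_0}\xi_p(x)=0$ genuinely kills them, and no control on second derivatives of the extensions is ever needed. Your combinatorial step (extending $\mu\in\Map{h}{l}$ by prescribing the image of the new index) is exactly the paper's counting argument and is fine; what must be abandoned is the wholesale reduction of the tensorial contraction to an iterated directional derivative, in favour of the one-derivative-at-a-time induction.
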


\begin{proof}
  We proceed by induction. We remind first the inductive definition of higher
  order covariant derivative:
  \begin{equation*}
    \nabla_{\xi_0\otimes\dots\otimes\xi_h}^{h + 1}
    \ \assign \
    \nabla_{\xi_0}
    \nabla_{\xi_1\otimes\dots\otimes\xi_h}^h -
    \sum_{p = 1}^h
    \nabla_{\xi_1\otimes\dots\otimes\nabla_{\xi_0} \xi_p\otimes\dots\otimes\xi_h}^h\,.
  \end{equation*}
  Taking a covariant derivative of the inductive assumption we infer
  \begin{multline*}
    \nabla_{\xi_0}
    \left[
      \left( \bigotimes_{p = 1}^h \xi_p \right) \neg
      \nabla^h \left( \prod_{j = 1}^l A_j \right)
    \right]\\
     =\ \
     \sum_{j = 1}^l \left(
       \sum_{\mu \in \Map{h}{l}}
       \nabla^{\mu^{-1}(1)}_{\xi_{\bullet}} A_1
       \cdots
       \nabla_{\xi_0} \nabla^{\mu^{-1}(j)}_{\xi_{\bullet}} A_j
       \cdots
       \nabla^{\mu^{-1}(l)}_{\xi_{\bullet}} A_l
  \right).
  \end{multline*}
  Thanks to the tensorial nature of the multi-covariant derivative, we can
  assume $\nabla_{\xi_0} \xi_p \left( x \right) = 0$, $p = 1, \ldots, k$ at
  some arbitrary point $x$. Then
  \begin{multline*}
    \left( \bigotimes_{p = 0}^h \xi_p \right)\neg\nabla^{h + 1}
    \left( \prod_{j = 1}^l A_j \right) \\
    =\
    \sum_{j = 1}^l\left(
      \sum_{\mu \in \Map{h}{l}}
      \nabla^{\mu^{-1}(1)}_{\xi_{\bullet}} A_1
      \cdots
      \nabla^{\{0\} \cup \mu^{-1}(j)}_{\xi_{\bullet}} A_j
      \cdots
      \nabla^{\mu^{-1}(l)}_{\xi_{\bullet}} A_l
    \right).
  \end{multline*}
  The conclusion follows from the observation that
  \begin{equation}
    \Map{\{0, \ldots, h\}}{l} =
    \{\mu_j \mid \mu \in \Map{h}{l}, j \in [l]\}\,,
  \end{equation}
  where the map $\mu_j : \{0, \ldots, h\} \longrightarrow [l]$ is defined as
  by $\mu_j(0) = j$ and
  $\mu_j(i) = \mu(i)$ for $i\neq 0$.
\end{proof}

\begin{corollary}
  If $\xi$ is a vector field over $X$ then
  \begin{equation*}
    \xi^{\otimes h} \neg \frac{1}{h!} \nabla^h
    \left( \prod_{j = 1}^l A_j \right)
    \ = \
    \sum_{H\in\N^l(h)}\ \prod_{j = 1}^l
    \left( \xi^{\otimes h_j} \neg \frac{1}{h_j !} \nabla^{h_j} A_j \right)\,.
  \end{equation*}
\end{corollary}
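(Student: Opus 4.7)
The plan is to apply the general Leibniz identity of Lemma~\ref{abstrLeibnitz} directly, specializing the family of vector fields $(\xi_p)_{p=1}^h$ to the constant family $\xi_p = \xi$ for all $p$. With this specialization, for any subset $S \subseteq [h]$, one has
\[
\nabla^{S}_{\xi_{\bullet}}A_j\ =\ \xi^{\otimes |S|}\neg\nabla^{|S|}A_j\,,
\]
so that $\nabla^{\mu^{-1}(j)}_{\xi_{\bullet}}A_j$ depends on $\mu$ only through the cardinality $h_j \assign |\mu^{-1}(j)|$.

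The next step is to rearrange the sum in Lemma~\ref{abstrLeibnitz} according to the fiber sizes $H = (h_1,\dots,h_l)$. Writing
\[
\Map{h}{l}\ =\ \bigsqcup_{H\in\N^l(h)}\{\mu\in\Map{h}{l}\mid |\mu^{-1}(j)| = h_j,\ j=1,\dots,l\}\,,
\]
and using the elementary multinomial count that the inner set has cardinality $\binom{h}{h_1,\dots,h_l} = \frac{h!}{h_1!\cdots h_l!}$, Lemma~\ref{abstrLeibnitz} specialized to $\xi_p\equiv\xi$ becomes
\[
\xi^{\otimes h}\neg\nabla^h\!\left(\prod_{j=1}^lA_j\right)\ =\ \sum_{H\in\N^l(h)}\frac{h!}{h_1!\cdots h_l!}\prod_{j=1}^l\bigl(\xi^{\otimes h_j}\neg\nabla^{h_j}A_j\bigr)\,.
\]

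Finally, dividing both sides by $h!$ distributes the factorial as $\frac{1}{h!}\cdot\frac{h!}{h_1!\cdots h_l!} = \prod_{j=1}^l\frac{1}{h_j!}$, which yields precisely the claimed identity. There is no real obstacle here; the only point requiring minimal care is the combinatorial identification of the fiber of the cardinality map $\Map{h}{l}\to\N^l(h)$, $\mu\mapsto(|\mu^{-1}(1)|,\dots,|\mu^{-1}(l)|)$, with the set of ordered partitions of $[h]$ into $l$ (possibly empty) blocks, whose size is the multinomial coefficient above.
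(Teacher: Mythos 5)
Your proof is correct and is exactly the intended derivation: the paper states this as an immediate corollary of Lemma~\ref{abstrLeibnitz} without writing out a proof, and specializing to the constant family $\xi_p\equiv\xi$, grouping the maps $\mu$ by fiber sizes, and absorbing the multinomial coefficient into the factorials is precisely the implicit argument.
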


We infer the inequality with respect to the pointwise max norm on multilinear
forms
\begin{equation*}
  \frac{1}{h!}  \left\| \nabla^h  \left( \prod_{j = 1}^l A_j \right) \right\|
  \ \leqslant\
  \sum_{H\in\N^l(h)}\ \prod_{j = 1}^l\ \frac{1}{h_j!}
  \| \nabla^{h_j} A_j \|\,.
\end{equation*}
The previous pointwise inequality leads to the global estimate
\begin{equation}
  \label{estimProd}
  \frac{1}{h!} \left\| \nabla^h
    \left( \prod_{j = 1}^l A_j \right)
  \right\|_r
  \ \leqslant\
  C^{l - 1}_r \sum_{H\in\N^l(h)}\ \prod_{j = 1}^l\
  \frac{1}{h_j !}  \| \nabla^{h_j} A_j \|_r\,.
\end{equation}
\section{Proof of the theorem \ref{MainTeo}}

In this final section, we need to consider trees on the union of two families
of vector fields $\eta_{\bullet}, \xi_{\bullet}$. This means that our label
set for the trees will be a subset of two copies of $\N_{\geq 1}$ together
with the usual empty root $\circ$. To distinguish the two copies, we write
them $\N_{\geq 1}'=\{1', 2',\dots\}$ and $\N_{\geq 1}=\{1, 2,\dots\}$. Recall
that the \emph{ordered sum} $S+T$ of two totally ordered sets $(S,
\leq_S)$ and $(T, \leq_T)$ is the disjoint union $S+T := S\sqcup T$ together
with the order $\leq_{S+T}$ which keeps the relative order of the sets and
such that all the elements of $S$ are smaller than the element of $T$. Formally,
\begin{equation*}
  x\leq_{S+T} y\quad\Longleftrightarrow\quad
  \begin{cases}
    \text{$x\leq_S y$ if $x,y\in S$ or}\\
    \text{$x\leq_T y$ if $x,y\in T$ or}\\
    \text{$x\in S$ and $y\in T$}.\\
  \end{cases}
\end{equation*}
\begin{definition}
  Let $S'\subset\N_{\geq 1}'$ and $S\subset\N_{\geq 1}\cup\{\circ\}$. Let
  $\mu\in\Map{S'}{S}$ and $F$ a forest on $S$. We denote $\mu\cup F$ the forest
  on $S' + S$ with father function $f$ defined by
  $f(i') = \mu(i')$ for $i'\in S'$ and $f(i)=F(i)$ for $i\in S$.
\end{definition}
We remark that the roots of $\mu\cup F$ are the same as the roots of $F$ so
that if $F$ is actually a tree $T$ then $\mu\cup T$ is also a tree.

\begin{example}
  Let $S=\{1,3,4,5,8,9,\circ\}$ and
  $S'=\{1',2',4',6',7',9'\}$. Consider the map $\mu=\left\{
    \begin{array}{@{\,}c@{\,}c@{\,}c@{\,}c@{\,}c@{\,}c}
      1'& 2'& 4'& 6'& 7'& 9'\\
      8 & 8 & 3 & \circ & 9 & 8\\
    \end{array}\right.$.
  The picture below show some tree $T$ on $S$ together with the associated
  $\mu\cup T$ tree:
  \[
  T =
  \begin{tikzpicture}[baseline=-20, scale=0.4, inner sep=0.3mm]
    \node[circle, draw, fill=gray!30] {\phantom{$A$}}
    child {node[circle, draw, fill=gray!30] {$8$}
      child {node[circle, draw, fill=gray!30] {$5$}
        child {node[circle, draw, fill=gray!30] {$1$}}
        child {node[circle, draw, fill=gray!30] {$3$}}
      }
    }
    child {node[circle, draw, fill=gray!30] {$9$}
      child {node[circle, draw, fill=gray!30] {$4$}}
    } ;
  \end{tikzpicture}\qquad\qquad
  \mu\cup T =
  \begin{tikzpicture}[baseline=-20, scale=0.4, inner sep=0.3mm, level
    1/.style={sibling distance=3cm},level 2/.style={sibling distance=2cm}]
    \node[circle, draw, fill=gray!30] {\phantom{$A$}}
    child {node[circle, draw] {$6'$}}
    child {node[circle, draw, fill=gray!30] {$8$}
      child {node[circle, draw] {$1'$}}
      child      {node[circle, draw] {$2'$}}
      child {node[circle, draw] {$9'$}}
      child      {node[circle, draw, fill=gray!30] {$5$}
        child {node[circle, draw, fill=gray!30] {$1$}}
        child        {node[circle, draw, fill=gray!30] {$3$}
          child {node[circle, draw] {$4'$}}
        }
      }
    }
    child [missing] {}
    child {node[circle, draw, fill=gray!30] {$9$}
      child {node[circle, draw] {$7'$}}
      child {node[circle, draw, fill=gray!30] {$4$}}
    } ;
  \end{tikzpicture}
  \]
  Expanding the associated nested derivative as in Definition~\ref{def-tree} gives:
  \[\nabla_{\eta_{\bullet}, \xi_{\bullet}}^{\mu\cup T} A =
  \begin{tikzpicture}[baseline=-20, scale=0.6, inner sep=0.3mm, level
    1/.style={sibling distance=3cm},level 2/.style={sibling distance=2cm}]
    \node[right=0.5cm] {$A$};
    \node[circle, draw, fill=gray!30] {$\nabla^3$}
    child {node[circle, draw] {$\eta_6$}}
    child {node[circle, draw, fill=gray!30] {$\nabla^4{\xi_8}$}
      child {node[circle, draw] {$\eta_1$}}
      child {node[circle, draw] {$\eta_2$}}
      child {node[circle, draw] {$\eta_9$}}
      child {node[circle, draw, fill=gray!30] {$\nabla^2{\xi_5}$}
        child {node[circle, draw, fill=gray!30] {$\xi_1$}}
        child {node[circle, draw, fill=gray!30] {$\nabla{\xi_3}$}
          child {node[circle, draw] {$\eta_4$}}
        }
      }
    }
    child [missing] {}
    child {node[circle, draw, fill=gray!30] {$\nabla^2{\xi_9}$}
      child {node[circle, draw] {$\eta_7$}}
      child {node[circle, draw, fill=gray!30] {$\xi_4$}}
    };
  \end{tikzpicture}
  \]
\end{example}
We remark that for any node $\nu$ of $T$
\begin{equation*}
  \nchild[\mu\cup T]{\nu} =
  \left| \mu^{- 1} \left( \nu \right) \right| + \nchild[T]{\nu}\,.
\end{equation*}
Moreover according to Definition~\ref{def-tree},
$\nabla_{\eta_{\bullet}, \xi_{\bullet}}^{\mu\cup T} A$ is given by
\begin{equation*}
  \nabla_{\eta_{\bullet}, \xi_{\bullet}}^{\mu\cup T} A
  \ = \
  \left[
    \left( \bigotimes_{p \in \mu^{- 1} \left( \emptyroot \right)} \eta_p \right)
    \otimes
    \left( \bigotimes_{\nu \in\child{\emptyroot}}
      \nabla^{\mu\cup T_{\nu}}_{\eta_{\bullet}, \xi_{\bullet}} \xi_{\nu}
    \right)
  \right] \neg
  \nabla^{\nchild[\mu\cup T]{\emptyroot}} A\,,
\end{equation*}
with
\begin{equation*}
  \nabla^{\mu\cup T_{\nu}}_{\eta_{\bullet}, \xi_{\bullet}} \xi_{\nu}
  \ = \
  \left[
    \left( \bigotimes_{p \in \mu^{- 1} \left( \nu \right)} \eta_p \right)
    \otimes
    \left( \bigotimes_{n \in \child{\nu}}
      \nabla^{\mu\cup T_n}_{\eta_{\bullet}, \xi_{\bullet}} \xi_{n}
    \right)
  \right] \neg
  \nabla^{\nchild[\mu\cup T]{\nu}} \xi_{\nu}\,,
\end{equation*}
and so on, with $\nabla^{\mu\cup\emptyset}_{\eta_{\bullet}, \xi_{\bullet}}
\assign \mathbbm{I}$ and the abuse of notation
$\mu\cup H := \mu|_{\mu^{-1}(H)}\cup H$ for any subtree $H$ of $T$.
\bigskip

\begin{proof}[Proof of theorem~\ref{MainTeo}]
We apply recursively Lemma~\ref{abstrLeibnitz} in case~\ref{case-prod-tree}
with
\begin{itemize}
\item $A_1:=\nabla^{l_{\rho(T)}}A$,
\item $A_i:=\nabla^{T_{\nu}}_{\xi_{\bullet}} \xi_{\nu}$, if $i>1$, where $\nu$
  is the $i$-th child of $\rho(T)$.
\end{itemize}
We get
\begin{equation}\label{diff-mult-tree}
  \nabla^{S'}_{\eta_\bullet}
  \left(\nabla_{\xi_\bullet}^T A\right)
  \ \equiv\
  \left( \bigotimes_{p \in S'} \eta_p \right) \neg \nabla^{|S'|}
  \left(\nabla_{\xi_\bullet}^T A\right)
  \ =\
  \sum_{\mu \in \Map{S'}{T}}
  \nabla_{\eta_{\bullet}, \xi_{\bullet}}^{\mu\cup T} A\,.
\end{equation}
By formula (\ref{multipLIEforest}) and linearity, we get
\begin{multline*}
  \left( \bigotimes_{p = 1}^h \eta_p \right) \neg \nabla^h \left[\left( \prod_{j =
        1}^k L_{\xi_j} \right) A \right]
  \\
  =
  \sum_{F \in\Forests_{k^\circ}} (-1)^{l_F - 1}
  \left( \bigotimes_{p = 1}^h \eta_p \right) \neg \nabla^h
  \left[\left( \prod_{T \in
  F^{\dagger}} \ad \left( \nabla_{\xi_{\bullet}}^{T} \nabla
  \xi_{\Root{T}} \right) \right) \nabla_{\xi_{\bullet}}^{F_\circ} A\right]\,.
\end{multline*}
We use now Lemma~\ref{abstrLeibnitz} in case~\ref{case-prod-comp},
writing $F=\{T_1,\dots,T_{l_F-1},T_{l_F}=F_{\circ}\}$ in the increasing order provided
by the labels of the roots of $F$, from the left to the right
with
\begin{itemize}
\item $A_i:=\ad \left( \nabla_{\xi_{\bullet}}^{T_i} \nabla
    \xi_{\Root{T_i}}\right)$
  for $i=1,\dots,T_{l_F-1}$\,,
\item $A_{l_F}:=\nabla_{\xi_{\bullet}}^{F_\circ}A$\,.
\end{itemize}
One obtains, for a fixed forest $F$,
\begin{align*}
  G_F&\ \assign\
  \left( \bigotimes_{p = 1}^h \eta_p \right) \neg \nabla^h
  \left[\left(
      \prod_{T \in F^{\dagger}}
      \ad \left( \nabla_{\xi_{\bullet}}^{T} \nabla \xi_{\Root{T}} \right)
    \right) \nabla_{\xi_{\bullet}}^{F_\circ} A\right]\\[3mm]
  &\ =\
  \sum_{\mu\in \Map{h}{l_F}}\left[
    \prod_{j=1}^{l_F-1}
    \ad \left(
      \nabla^{\mu^{-1}(j)}_{\eta_\bullet}
      \nabla_{\xi_{\bullet}}^{T_j} \nabla \xi_{\Root{T_j}} \right)\right]
  \nabla^{\mu^{-1}(l_F)}_{\eta_\bullet} \nabla_{\xi_{\bullet}}^{F_\circ} A\,.
  \intertext{Applying Equation~\ref{diff-mult-tree} to each $T_j$, we obtain}
  G_F&\ =\
  \sum_{\mu\in \Map{h}{l_F}}\left[
    \prod_{j=1}^{l_F-1}
    \ad \left(
      \sum_{\beta_j\in \Map{\mu^{-1}(j)}{T_j}}
      \nabla_{\eta_{\bullet}, \xi_{\bullet}}^{\beta_j\cup T_j}
      \nabla \xi_{\Root{T_j}}
    \right)\right]\times \\
  &
  \times \left(
    \sum_{\beta_{l_F}\in \Map{\mu^{-1}(l_F)}{F_\circ}}
    \nabla_{\eta_{\bullet}, \xi_{\bullet}}^{\beta_{l_F}\cup F_\circ}A
  \right)\,.
\end{align*}
We now recombine the maps $\mu, (\beta_j)_{j=1..l_F}$ into a single map
$\alpha\in\Map{h}{k^\circ}$ by setting $\alpha(p) := \beta_{\mu(p)}(p)$. Each
$\alpha\in\Map{h}{k^\circ}$ is obtained exactly once from a pair
$(\mu,(\beta_j)_{j=1..l_F})$. Then
\begin{multline*}
  \left( \bigotimes_{p = 1}^h \eta_p \right) \neg
  \nabla^h \left[\left( \prod_{j = 1}^k L_{\xi_j} \right) A \right]
  \\
  = \sum_{\substack{
        F \in \Forests_{k^\circ}\\
        \alpha \in \Map{h}{k^\circ}
      }}
    (-1)^{l_F - 1}
    \left[
      \prod_{T \in F^{\dagger}}
      \ad \left( \nabla_{\eta_{\bullet}, \xi_{\bullet}}^{\alpha\cup T} \nabla
      \xi_{\Root{T}} \right)
  \right]
  \nabla_{\eta_{\bullet}, \xi_{\bullet}}^{\alpha\cup F_\circ} A\,,
\end{multline*}
where we again used the abuse of notation $\alpha\cup T :=
\alpha|_{\alpha^{-1}(T)}\cup T$ for any subtree $T$ of $F$.

In the case $\eta = \eta_p$ for all $p$, we obtain as for the inequality
(\ref{estimProd})
\begin{align}
  &  \frac{1}{h!}  \left\| \nabla^h \left( \prod_{j = 1}^k L_{\xi_j}
    \right) A \right\|_r\nonumber
  \\[3mm]
  &\  \leqslant C^k_r \sum_{\substack{
    F \in \Forests_{k^\circ}\\
    H \in \N^{k + 1}(h)
  }} 2^{l_F - 1}\ \times\nonumber
  \\
  &\ \times
  \left( \frac{1}{h_{k + 1} !}  \left\| \nabla^{h_{k
  + 1} + \nchild{\circ}} A \right\|_r \cdot \prod_{\nu \in F^{\ast}_\circ} \frac{1}{h_{\nu} !} \| \nabla^{h_{\nu} + \nchild{\nu}}
  \xi_{\nu} \|_r \right) \times\nonumber\\
  &\ \times \prod_{T \in F^{\dagger}} \left( \frac{1}{h_{\Root{T}} !}
  \left\| \nabla^{h_{\Root{T}} + 1 + \nchild{\Root{T}}} \xi_{\Root{T}}
  \right\|_r \cdot \prod_{\nu \in T^{\ast}} \frac{1}{h_{\nu} !} \|
  \nabla^{h_{\nu} + \nchild{\nu}} \xi_{\nu} \|_r \right)\,.\label{estmHCovLieJ}
\end{align}
For any fixed $H \in\N^{k + 1}(h)$ we consider the terms of the sum in
(\ref{estmHCovLieJ}) and we set $B^l \assign \frac{1}{h_{k + 1} !}  \|
\nabla^{h_{k + 1} + l} A\|_r$ and $X^{l}_{\nu} \assign
\frac{1}{h_{\nu} !}  \| \nabla^{h_{\nu} + l} \xi_{\nu}
\|_r$. Then the estimate in the statement of theorem \ref{MainTeo} follows
from theorem \ref{TheoLieTree}.
\end{proof}

\section*{Aknowledgment}

This research was driven by computer exploration, using the open-source math-
ematical software Sagemath~\cite{Sagemath} and its algebraic combinatorics
features developed by the Sage-Combinat community~\cite{SC}.

\vspace{1cm}
\noindent
Florent Hivert
\\
Laboratoire de Recherche en Informatique,
Universit\'{e} Paris Sud,
\\
B\^{a}timent 650 F91405 Orsay, France
\\
E-mail: \texttt{florent.hivert@lri.fr}

\vspace{0.5cm}
\noindent
Nefton Pali
\\
Universit\'{e} Paris Sud, D\'epartement de Math\'ematiques
\\
B\^{a}timent 307 F91405 Orsay, France
\\
E-mail: \texttt{nefton.pali@math.u-psud.fr}

\end{document}